\def\dicho#1{\expandafter\@dicho\csname c@#1\endcsname}
\def\@dicho#1{\ifnum#1>1 or \else\fi(\@Roman#1)}
\AddEnumerateCounter{\dicho}{\@dicho}{or (III)}
\newlist{dichotomy}{enumerate}{1}
\setlist[dichotomy]{label=\dicho*,leftmargin=1.5cm}
\numberwithin{equation}{section} \numberwithin{figure}{section}
 \DeclareMathOperator{\End}{End}
\DeclareMathOperator{\red}{red} 
\newcommand{\Q}{\mathbb{Q}}
\newcommand{\F}{\mathbb{F}}
\newcommand{\Z}{\mathbb{Z}}
\newcommand{\OO}{\mathcal{O}}
\newcommand{\rk}{\textup{rk}}
\newcommand{\MF}{\textup{\textsf{MF}}}
\newcommand{\MFC}{\textup{\textsf{MF\textsuperscript{c}}}}
\newcommand{\AMF}{\textup{\textsf{AMF}}}
\newcommand{\AMFC}{\textup{\textsf{AMF\textsuperscript{c}}}}
\theoremstyle{case}
\newtheorem{lemma}{Lemma}
\newtheorem{theorem}[lemma]{Theorem}
\newtheorem{algorithm}[lemma]{Algorithm}
\newtheorem{proposition}[lemma]{Proposition}
\numberwithin{table}{section}
\theoremstyle{definition}
\newtheorem{example}[lemma]{Example}
\newtheorem{question}[lemma]{Question}
\newtheorem{definition}[lemma]{Definition}
\newtheorem{remark}[lemma]{Remark}
\numberwithin{lemma}{section}
\providecommand\@dotsep{5}
\renewcommand{\listoftodos}[1][\@todonotes@todolistname]{%
  \@starttoc{tdo}{#1}}
\begin{document}

\AtBeginEnvironment{appendices}{\crefalias{section}{appendix}}

\title[Quadratic Cyclic Isogenies]
{Cyclic isogenies of elliptic curves over fixed quadratic fields}

\author{\sc Barinder S. Banwait}
\address{Barinder S. Banwait \\
Dept. of Mathematics \& Statistics\\  
Boston University\\
665 Commonwealth Avenue\\
Boston, MA 02215\\
USA}
\email{barinder.s.banwait@gmail.com}
\urladdr{https://barinderbanwait.github.io}

\author{\sc Filip Najman}
\address{Filip Najman \\
University of Zagreb\\  
Bijeni\v{c}ka Cesta 30 \\
10000 Zagreb\\
Croatia}
\email{fnajman@math.hr}
\urladdr{http://web.math.pmf.unizg.hr/~fnajman}

\author{\sc Oana Padurariu}
\address{Oana Padurariu \\
Dept. of Mathematics \& Statistics\\  
Boston University\\
665 Commonwealth Avenue\\
Boston, MA 02215\\
USA}
\email{oana@bu.edu}
\urladdr{https://sites.google.com/view/oana-padurariu/home}

\subjclass[2010]
{11G05  (primary), 
11Y60,   
11G15.   
(secondary)}

\begin{abstract}
Building on Mazur's 1978 work on prime degree isogenies, Kenku determined in 1981 all possible cyclic isogenies of elliptic curves over $\Q$. Although more than 40 years have passed, the determination of cyclic isogenies of elliptic curves over a single other number field has hitherto not been realised.

In this paper we develop a procedure to assist in establishing such a determination for a given quadratic field. Executing this procedure on all quadratic fields $\Q(\sqrt{d})$ with $|d| < 10^4$ we obtain, conditional on the Generalised Riemann Hypothesis, the determination of cyclic isogenies of elliptic curves over $19$ quadratic fields, including
$\Q(\sqrt{213})$ and $\Q(\sqrt{-2289})$. To make this procedure work, we determine all of the finitely many quadratic points on the modular curves $X_0(125)$ and $X_0(169)$, which may be of independent interest.
\end{abstract}

\maketitle


\section{Introduction}

An important problem in the theory of elliptic curves over number fields is to understand their possible torsion groups, parametrised by noncuspidal points on the modular curves $X_1(m,n)$, and possible isogenies, parametrised by noncuspidal points on $X_0(N)$. Mazur \cite{mazur1977modular} determined the possible torsion groups over $\Q$ and Kamienny, Kenku and Momose \cite{kamienny92, KM88} determined the possible torsion groups over quadratic fields. More recently, Derickx, Etropolski, van Hoeij, Morrow and Zureick-Brown \cite{Deg3Class} determined the possible torsion groups over cubic fields and Derickx, Kamienny, Stein and Stoll \cite{DKSS} determined all the primes dividing the order of some torsion group over number fields of degree $4 \leq d \leq  7$. Merel proved that the set of all possible torsion groups over all number fields of degree $d$ is finite, for any positive integer $d$ \cite{merel}. All the possible torsion groups over a fixed number field $K$, for many fixed number fields of degree $2$, $3$ and $4$ have been determined, see \cite{najman2010,bruin_najman2016,trbovic2020}, but all the possible isogenies have not been determined over a single number field other than $\Q$.

Unfortunately, much less is known about possible isogeny degrees - for any $d>1$ it is not known what the possible cyclic isogeny degrees of all elliptic curves over all number fields of degree $d$ are. However, the second author \cite{NajmanCamb} determined all the prime degree isogenies of non-CM elliptic curves $E$ with $j(E)\in \Q$ for number fields $d\leq 7$ (and conditionally on Serre's uniformity conjecture for all $d$). This has been extended to all $d>1.4\times 10^7$  unconditionally by Le Fourn and Lemos \cite[Theorem 1.3]{LeFournLemos}.

Mazur \cite{mazur1978rational} determined all the possible primes which arise as degrees of rational isogenies over $\Q$, and explained in the introduction of his paper that to determine all the possible cyclic isogeny degrees, it suffices to determine the $\Q$-rational points on $X_0(N)$ for a small list $S(\Q)$ of composite integers $N$. We will recall this method allowing one to go from prime degree isogenies to composite degree isogenies in \Cref{sec:mazur_strategy}, and will henceforth refer to it as \emph{Mazur's strategy}. Results known at the time allowed Mazur to deal with all but five of these values, viz. $N = 39, 65, 91, 125, 169$. These five cases were subsequently dealt with by Kenku \cite{kenku1, kenku2, kenku3, kenku4}, who showed for these five values that $X_0(N)(\Q)$ consists only of the cusps, yielding the explicit determination of the cyclic isogeny degrees over the rationals. The results are summarized in \Cref{tab:q} below, in which $g$ denotes the genus of $X_0(N)$ and $\nu$ is the number of noncuspidal $\Q$-rational points on $X_0(N)$.

\begin{table}[htp]
\begin{center}
\begin{tabular}{ccccccccccccccccccc}

\hline
$N$ & & $g$ & & $\nu$ & & & $N$ & & $g$ & & $\nu$ & & & $N$ & & $g$ & & $\nu$\\
\hline
$\leq 10$ & & $0$ & & $\infty$ & & & $11$ & & $1$ & & $3$ & & & $37$ & & $2$ & & $2$\\
$12$ & & $0$ & & $\infty$ & & & $14$ & & $1$ & & $2$ & & & $43$ & & $3$ & & $1$\\
$13$ & & $0$ & & $\infty$ & & & $15$ & & $1$ & & $4$ & & & $67$ & & $5$ & & $1$\\
$16$ & & $0$ & & $\infty$ & & & $17$ & & $1$ & & $2$ & & & $163$ & & $13$ & & $1$\\
$18$ & & $0$ & & $\infty$ & & & $19$ & & $1$ & & $1$ & & & & &\\
$25$ & & $0$ & & $\infty$ & & & $21$ & & $1$ & & $4$ & & & & &\\
& & & & & & & $27$ & & $1$ & & $1$ & & & & &\\
\hline
\end{tabular}
\vspace{0.3cm}
\caption{\label{tab:q}Cyclic isogenies over $\Q$.}
\end{center}
\end{table}

\begin{theorem}[Mazur, Kenku]
\Cref{tab:q} is a complete classification of all rational cyclic isogenies of elliptic curves over $\Q$.
\end{theorem}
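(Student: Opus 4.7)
The plan is to combine the three ingredients named in the excerpt into a single classification, namely Mazur's theorem on prime degree isogenies, Mazur's strategy for passing from prime degree to cyclic degree, and Kenku's case-by-case analysis of the remaining composite levels.

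First, I would appeal to Mazur's 1978 theorem, which determines the set $P$ of primes $p$ such that $X_0(p)(\Q)$ contains a noncuspidal point: these are exactly $p \in \{2,3,5,7,11,13,17,19,37,43,67,163\}$. This already accounts for the prime entries in \Cref{tab:q}, and shows that every rational cyclic isogeny degree $N$ must have all prime factors in $P$. Next I would invoke Mazur's strategy, which will be recalled in \Cref{sec:mazur_strategy}: starting from $P$ and the knowledge of $X_0(p^k)(\Q)$ for prime powers, one produces a finite explicit set $S(\Q)$ of composite integers such that every non-prime-power cyclic isogeny degree over $\Q$ either appears already in \Cref{tab:q} or corresponds to a noncuspidal point of $X_0(N)(\Q)$ for some $N \in S(\Q)$.

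Second, I would handle the levels $N$ where $X_0(N)$ has genus $0$; these are the entries marked $\infty$ in the table, and once one exhibits a single noncuspidal $\Q$-point the $j$-line parametrisation gives infinitely many. For the levels $N$ with $1 \le g(X_0(N)) \le 2$ coming from prime power levels or from \emph{known} composite entries in $S(\Q)$, the determination of $X_0(N)(\Q)$ reduces to a Mordell--Weil calculation on the Jacobian $J_0(N)$ together with a descent/sieve argument; the resulting noncuspidal points, classically tabulated, give the remaining entries of \Cref{tab:q}.

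Third, and this is the genuinely hard part, I would invoke Kenku's four papers to rule out noncuspidal $\Q$-points on $X_0(N)$ for the five remaining values $N \in \{39, 65, 91, 125, 169\}$. For each such $N$ the curve $X_0(N)$ has positive genus but the obvious analytic rank or quotient arguments do not quite close the problem, so one has to combine a careful study of the relevant Jacobian quotients, formal immersion or Mordell--Weil sieve arguments, and in the $N=125,\,169$ cases an auxiliary modular curve argument. Having ruled these out, the only composite levels yielding rational cyclic isogenies are those already listed, so \Cref{tab:q} is exhaustive. I expect the five cases just mentioned to be the chief obstacle; everything else is either a direct quotation of Mazur's theorem or a routine genus/rank computation.
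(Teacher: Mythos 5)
Your overall architecture --- Mazur's 1978 determination of the prime isogeny degrees, the reduction to a finite list $S(\Q)$ of composite levels, and Kenku's treatment of the five recalcitrant values $N = 39, 65, 91, 125, 169$ --- is exactly the decomposition to which the paper attributes this theorem, so the citation skeleton is right. But there is one genuine logical gap: your reduction step only establishes that every cyclic isogeny degree is divisible by some level $N$ (prime or in $S(\Q)$) for which $X_0(N)(\Q)$ contains a noncuspidal point, and your conclusion that ``having ruled these out, the only composite levels yielding rational cyclic isogenies are those already listed'' does not follow from that alone. Several levels in \Cref{tab:q} --- namely $11, 14, 15, 17, 21, 27, 37, 43, 67, 163$ --- \emph{do} have noncuspidal rational points, and nothing in your argument excludes a cyclic isogeny whose degree is a proper multiple of one of these, e.g.\ a $22$- or $74$-isogeny. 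The curves $X_0(22)$ and $X_0(74)$ are not minimal of positive genus (they have proper divisors $11$ and $37$ of positive genus), so they never enter $S(\Q)$, and knowing that $X_0(11)(\Q)$ or $X_0(37)(\Q)$ is nonempty obviously does not rule them out.

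This is precisely why Mazur's strategy, as recalled in \Cref{sec:mazur_strategy}, has a second step (b): for each of the finitely many $j$-invariants occurring as noncuspidal points on $X_0(N)$ with $N$ minimal of positive genus, one computes the full graph of rational isogenies of the corresponding isogeny class and checks that no ``unrecorded'' isogeny of larger degree appears. Over $\Q$ this is a finite, easy verification --- Mazur's ``pure thought'' --- and the paper automates the analogous step over quadratic fields in \Cref{sec:isogeny_graphs}; but it cannot be omitted from the logic. With that step added, your proof matches the intended argument. A second, much smaller quibble: you restrict the ``routine'' determinations to levels with $1 \le g(X_0(N)) \le 2$, but $X_0(43)$, $X_0(67)$ and $X_0(163)$ have genera $3$, $5$ and $13$; their rational points are supplied by Mazur's 1978 theorem itself rather than by a genus-$\le 2$ Mordell--Weil computation.
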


Since the appearance of Kenku's final paper in 1981, such an explicit determination has not been exhibited for \emph{any other number field}. This is the primary motivation for the present work. 
An important ingredient in our work will be the algorithm to determine isogenies of prime degree for fixed quadratic fields $K$ recently developed by the first author \cite{banwait2021explicit} assuming the Generalised Riemann Hypothesis (GRH). Computing the resulting composite integers $S(K)$ to be treated in Mazur's strategy yields a list typically larger than $S(\Q)$. Although the subject of higher degree points on modular curves has seen much recent development (see e.g. \cite{box2021quadratic,advzaga2021quadratic,NajmanVukorepa,box2021cubic}), some of the values $N \in S(K)$ are such that $X_0(N)$ has large genus, and therefore the determination of the $K$-rational points on $X_0(N)$ is beyond current methods. For this reason, we search for `convenient' quadratic fields $K$ for which (among other conditions) the largest value in $S(K)$ is $169$. This limits the genus of $X_0(N)$ and thereby allows many of the recently developed computational methods to succeed. Our search strategy will be explained in \Cref{sec:search_convenient}.

Among the quadratic fields $\Q(\sqrt{d})$ with $|d| < 10^4$, we find 133 which satisfy our definition of convenient, and therefore, for these quadratic fields, we have some positive hope that we may be able to completely determine the $\Q(\sqrt{d})$-rational points on $X_0(N)$ for $N \in S(\Q(\sqrt{d}))$. Carrying out this program - that is, applying the various known methods (summarised in \Cref{sec:methods_overview}) for determining whether or not $X_0(N)$ has noncuspidal quadratic points over a fixed quadratic field - is the main technical heart of the paper, and yields our main result.

\begin{theorem}\label{thm:main}
Let $d$ be one of the following $19$ values:
\begin{equation}\label{list:really_convenient_ds}
  \begin{gathered}
-6846, \ -2289, \ 213, \ 834, \ 1545, \ 1885, \ 1923,\\
2517, \ 2847, \ 4569, \ 6537, \ 7131, \ 7302,\\
7319, \ 7635, \ 7890, \ 8383, \ 9563, \ 9903.
\end{gathered}
\end{equation}
Then, assuming GRH, \Cref{tab:all_isogs} lists all cyclic isogenies of elliptic curves defined over $\Q(\sqrt d)$ that are not contained in \Cref{tab:q}.
\end{theorem}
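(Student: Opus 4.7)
The plan is to follow Mazur's strategy for each of the 19 quadratic fields $K = \Q(\sqrt d)$ in the list. The first step, which has essentially already been automated, is to run the first author's GRH-conditional algorithm from \cite{banwait2021explicit} on each $K$ to obtain the finite set of primes that can occur as degrees of a cyclic $K$-isogeny. Feeding these primes into Mazur's strategy (recalled in \Cref{sec:mazur_strategy}) then produces the finite list $S(K)$ of composite integers $N$ for which the $K$-rational points on $X_0(N)$ must still be understood; by the definition of \emph{convenient} recalled in \Cref{sec:search_convenient}, our choice of the 19 values of $d$ guarantees that every $N \in S(K)$ satisfies $N \le 169$, so that the modular curves involved are of reasonably small genus.

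The bulk of the proof then reduces to, for each such $K$ and each $N \in S(K)$, deciding whether $X_0(N)$ has a noncuspidal $K$-rational point and, if so, listing these points. Here I would work case by case according to the genus and known quadratic points of $X_0(N)$, using the toolkit summarised in \Cref{sec:methods_overview}: for $X_0(N)$ of genus $\le 1$ and for those of genus $\ge 2$ whose quadratic points are already known in the literature (as is the case for many $N < 125$ via \cite{box2021quadratic,advzaga2021quadratic,NajmanVukorepa} and related work), one simply reads off the list of quadratic points and filters by the particular $K$. For the handful of $N$ for which no such classification is available, I would apply Mordell--Weil sieving on $X_0(N)^{(2)}$, formal immersion criteria, or the method of relative symmetric Chabauty to show that the only quadratic points on $X_0(N)$ are those already accounted for.

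The main obstacle, and the primary new technical input, is the treatment of $X_0(125)$ and $X_0(169)$, which lie in $S(K)$ for several of the listed $d$ and for which a complete description of the quadratic points was not previously in the literature. For these two curves the plan is to verify that they have finitely many quadratic points (for instance via the gonality bounds or AL-quotient arguments that justify their inclusion), then to compute the Mordell--Weil group of a suitable quotient Jacobian, produce an explicit finite set of candidate quadratic points (e.g.\ from CM points and pullbacks), and finally close the list by Mordell--Weil sieving; this is exactly the auxiliary computation flagged in the abstract and would be packaged as a separate proposition used here.

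Once all these curves are handled, collating the resulting noncuspidal $K$-rational points over all $N \in S(K)$ and translating them back to isogeny degrees produces, for each of the 19 listed $d$, precisely the entries of \Cref{tab:all_isogs} that are not already contained in \Cref{tab:q}. Combining this with the output of the prime-degree algorithm, which is the source of the GRH hypothesis, then yields the complete list and proves the theorem.
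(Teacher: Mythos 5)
Your proposal follows essentially the same route as the paper: compute the minimally finite levels for each $K=\Q(\sqrt{d})$ via the GRH-conditional prime-degree algorithm, determine the $K$-rational points on each resulting $X_0(N)$ by combining the existing catalogues of quadratic points with the sieving/quotient/Chabauty toolkit, concentrate the new work on $X_0(125)$ and $X_0(169)$ (handled exactly as you sketch, via a relative symmetric Chabauty plus Mordell--Weil sieve argument reducing to pullbacks from $X_0^+(N)(\Q)$), and then collate. The only point you compress is step (b) of Mazur's strategy --- the explicit isogeny-graph computation needed to rule out ``unrecorded'' isogenies whose degree is a proper multiple of a minimally finite $N$ --- but the paper itself treats this as a routine automated check.
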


\begin{remark}
    \begin{enumerate}
        \item It is interesting to note that only $2$ of the $19$ values we found are negative. See \Cref{sec:search_convenient} for a possible explanation of this.
        \item The assumption of GRH is in fact only necessary for all degree $4$ extensions of $\Q(\sqrt d)$; see \Cref{rem:minimally_finite_assumptions} for more details on this.
    \end{enumerate}
     
\end{remark}

For the convenience of the reader, we provide here the analogous table to \Cref{tab:q} for the smallest absolute value in the list~\eqref{list:really_convenient_ds}.

\begin{table}[htp]
\begin{center}
\begin{tabular}{ccccccccccccccccccc}
\hline
$N$ & & $g$ & & $\nu$ & & & $N$ & & $g$ & & $\nu$ & & & $N$ & & $g$ & & $\nu$\\
\hline
$\leq 10$ & & $0$ & & $\infty$ & & & $14$ & & $1$ & & $2$ & & & $27$ & & $1$ & & $\infty$\\
$12$ & & $0$ & & $\infty$ & & & $15$ & & $1$ & & $\infty$ & & & $32$ & & $1$ & & $\infty$\\
$13$ & & $0$ & & $\infty$ & & & $17$ & & $1$ & & $2$ & & & $36$ & & $1$ & & $\infty$\\
$16$ & & $0$ & & $\infty$ & & & $19$ & & $1$ & & $1$ & & & $37$ & & $2$ & & $2$\\
$18$ & & $0$ & & $\infty$ & & & $20$ & & $1$ & & $\infty$ & & & $43$ & & $3$ & & $1$\\
$25$ & & $0$ & & $\infty$ & & & $21$ & & $1$ & & $4$ & & & $67$ & & $5$ & & $1$\\
$11$ & & $1$ & & $3$ & & & $24$ & & $1$ & & $\infty$ & & & $163$ & & $13$ & & $1$\\
\hline
\end{tabular}
\vspace{0.3cm}
\caption{\label{tab:qsqrt213}Cyclic isogenies over $\Q(\sqrt{213})$, assuming GRH.}
\end{center}
\end{table}

Comparing this with the situation of isogenies over $\Q$, we observe several more values of $N$ for which there are infinitely many elliptic curves with a cyclic isogeny of degree $N$ over our quadratic field. These are all explained by the genus $1$ modular curves which attain positive rank over $\Q(\sqrt{d})$. We do not obtain values of $N$ larger than those over $\Q$ due to our searching for `convenient' quadratic fields to consider.

Obtaining such results for a general number field $K$ (not just quadratic) will require enumeration of the $K$-rational points on some of the same modular curves each time. Indeed, as shown in \Cref{prop:minimally_finite_values} and \Cref{rem:minimally_finite_assumptions}(3), the values $N = 91, 125, 163, 169$ are guaranteed to arise whenever one attempts to enumerate the cyclic isogenies over any given number field. Since, for these values of $N$, the modular curve $X_0(N)$ admits only finitely many quadratic points, it would be valuable to have an explicit answer to the following.

\begin{question}\label{q:get_all_fin_quads}
For $N = 91, 125, 163, 169$, can one determine all of the finitely many quadratic points on $X_0(N)$?
\end{question}

Determining the quadratic points on $X_0(91)$ has recently been done by Vukorepa \cite{Vukorepa}. In this paper we deal with $125$ and $169$, yielding the following result.

\begin{theorem}\label{thm:quad_pts}
All finitely many quadratic points on $X_0(125)$ and $X_0(169)$ are as described in \Cref{tab:quad_125} and \Cref{tab:quad_169} in \Cref{sec:fin_quad_pts}.
\end{theorem}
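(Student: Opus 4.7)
The plan is to treat $X_0(125)$ and $X_0(169)$ separately, but in both cases via the now-standard combination of an explicit model, a Mordell--Weil sieve on the symmetric square, and (relative) symmetric Chabauty, following the framework set up by Siksek, Box, Najman--Vukorepa and others cited earlier. Both curves have genus $8$, so I would begin by producing explicit canonical models in $\PP^7$ (using the $q$-expansions of a basis of $S_2(\Gamma_0(N))$, either directly or via Magma's modular symbols package), and carefully recording the actions of the Atkin--Lehner involution $w_N$ and of the degeneracy maps $X_0(125)\to X_0(25)$ and $X_0(169)\to X_0(13)$. These auxiliary maps are essential, since the infinite families of rational points on $X_0(25)$ and $X_0(13)$ pull back to give many of the expected quadratic points.

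Next I would compile a provisional list of quadratic points. The sources to check systematically are: (a) pullbacks of $\Q$-rational points under the two degeneracy maps, (b) points fixed by $w_N$ (which yield quadratic points as preimages), and (c) CM points of small discriminant, cross-referenced against the CM structure tabulated in the LMFDB. The output of this step should match the tables referenced in the statement; the remainder of the proof is then to show completeness.

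To rule out stray quadratic points, I would study $J_0(N)(\Q)$ via the decomposition of $J_0(N)$ into $\Q$-simple newform factors (both levels are squarefull of prime-power type, so $J_0(N)=J_0(N)^{\mathrm{new}}$), computing each factor's analytic rank and torsion using the standard $L$-value computations and Kolyvagin/Kato when applicable to convert analytic rank~$0$ into algebraic rank~$0$. Once a finite-index subgroup of $J_0(N)(\Q)$ is under control, I would apply the Mordell--Weil sieve to the image of $X_0(N)^{(2)}(\Q)$ in $J_0(N)(\Q)$ under the Abel--Jacobi map based at a chosen rational cusp: for several auxiliary primes $\ell$ of good reduction, intersect the constraints coming from the reduction $X_0(N)^{(2)}(\mathbb{F}_\ell)\to J_0(N)(\mathbb{F}_\ell)$ with the known Mordell--Weil data, and show that the only compatible classes are those of the expected points.

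The main obstacle will be Mordell--Weil rank: if some factor of $J_0(N)$ has positive rank, standard Chabauty on $X_0(N)^{(2)}$ does not immediately apply, and one must pass to a quotient where the rank condition is satisfied. The natural quotient here is $X_0(N)/\langle w_N\rangle$, and the technique to lean on is Box's \emph{relative symmetric Chabauty}, which carries out Chabauty on the quotient and lifts the conclusion. Verifying the rank hypothesis for the quotient Jacobian, and checking that the Chabauty functionals are independent at every residue class surviving the sieve, will be the delicate computational heart of the argument; everything else is a bookkeeping exercise in the models and maps identified in the first paragraph.
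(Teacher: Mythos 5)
Your overall framework (explicit genus-$8$ models, a Mordell--Weil sieve on the symmetric square, relative symmetric Chabauty in the style of Siksek--Box--Najman--Vukorepa) is the right family of techniques and matches the paper's setup, but there is a genuine gap at the decisive final step. You propose to handle the positive-rank obstruction by ``passing to a quotient where the rank condition is satisfied'' and name that quotient as $X_0(N)/\langle w_N\rangle$. This is exactly backwards: for both $N=125$ and $N=169$ the Jacobian of $X_0^+(N)$ is precisely the positive-rank part of $J_0(N)$, and its rank equals the genus of the quotient ($2$ for $X_0^+(125)$, $3$ for $X_0^+(169)\cong X_s(13)$), so classical (relative symmetric) Chabauty cannot be ``carried out on the quotient.'' What the sieve-plus-Chabauty step actually uses is the rank-zero \emph{minus} part: the paper applies the operator $1-w_N$ to show that any unknown degree-$2$ divisor class satisfies $(1-w_N)[Q-D_\infty]=0$, hence (non-hyperellipticity) $Q=w_N(Q)$, i.e.\ every unknown quadratic point is a pullback of a rational point of $X_0^+(N)$. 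That reduction does not finish the proof; one still has to determine $X_0^+(125)(\Q)$ and $X_0^+(169)(\Q)$, and because rank equals genus there this requires \emph{quadratic} Chabauty. The paper imports these as external inputs: $X_0^+(125)$ has exactly $6$ rational points (Arul--M\"uller) and $X_0^+(169)\cong X_s(13)$ has exactly $7$ (Balakrishnan--Dogra--M\"uller--Tuitman--Vonk). Your proposal contains no mechanism to supply this input, so as written it cannot close.

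A secondary error: you assert that the degeneracy maps $X_0(125)\to X_0(25)$ and $X_0(169)\to X_0(13)$ pull back the infinite families of rational points on the genus-$0$ base curves to ``many of the expected quadratic points.'' Those maps have degree $5$ and $13$ respectively, so generic fibres over rational points are not quadratic; and indeed both $X_0(125)$ and $X_0(169)$ have only finitely many quadratic points, all of which arise as preimages of rational points under the degree-$2$ map to $X_0^+(N)$ (all CM except the $\Q(\sqrt{509})$ pair on $X_0(125)$). The provisional list should be compiled from those pullbacks and the CM data, not from degeneracy maps.
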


Of particular note here is the existence of a point on $X_0(125)$ defined over the quadratic field $\Q(\sqrt{509})$ which is not CM and which appears not to have been previously known. Evaluating the $j$-map at this point yields the following $j$-invariant of an elliptic curve over $\Q(\sqrt{509})$ which admits a cyclic $125$-isogeny:\\

\noindent \scalebox{1.1}{$j_{509}=\frac{-2140988208276499951039156514868631437312 \pm 94897633897841092841200334676012564480\sqrt{509}}{161051}.$}\\

This is perhaps surprising because, as $N$ gets larger, the non-CM quadratic points on $X_0(N)$ become rarer; for example we note that, of the curves $X_0(N)$ appearing in \cite{NajmanVukorepa} which have finitely many quadratic points, all of them have either no noncuspidal points or only CM points.  

\subsection*{Contribution to computational methodology}

As in Kenku's work, most of the effort for the determination of cyclic isogenies lies in showing that $X_0(N)(K)$ consists only of the cusps. The methods that we use for this have been implemented in Sage \cite{sagemath}. Moreover, obtaining \Cref{thm:quad_pts} was achieved with the aid of Magma \cite{magma}, and certain parts of the computation were verified in PARI/GP \cite{PARI}. All of the code used in our work may be found at the following GitHub repository:

\begin{center}
\url{https://github.com/barinderbanwait/quadratic_kenku_solver}
\end{center}

Paths to filenames given throughout the paper are relative to the top directory in this repository. In particular, we note that this repository has developed a command-line tool (in \path{sage_code/quadratic_kenku_solver.py}) which automates many of the necessary steps to assist in the determination of cyclic isogenies over a given quadratic field (more details are given at the end of \Cref{sec:isogeny_graphs}).

In summary, we believe that our software contributes to the development of current computational methodology for the following reasons: (1) it uses three computer algebra systems in its implementation; (2) it improves and brings together several other algorithms and methods (outlined in \Cref{sec:methods_overview}) that are utilised through a Python command-line interface; in particular steps previously done manually (e.g. looking up levels $N$ for which there are known quadratic points on $X_0(N)$) have now been fully automated; (3) it is ready-to-use for other researchers to attempt similar determinations of isogenies over other quadratic fields.

\subsection*{Outline of the paper}

\Cref{sec:mazur_strategy} is purely expository, and explains the strategy of Mazur to reduce the problem to determining all $\Q$-rational points on a finite and computable list of modular curves. In \Cref{sec:mazur_strategy_develop} we then generalise this strategy to number fields not containing the Hilbert class field of an imaginary quadratic field. \Cref{sec:search_convenient} describes the searching method to identify the 133 convenient quadratic fields, and \Cref{sec:methods_overview} gives an overview of the methods we used to determine $K$-rational points on modular curves for fixed quadratic fields $K$. While the $19$ values in \ref{list:really_convenient_ds} were found from running our implementation, it would be illuminating for the reader to have one case worked out in detail; this is done in \Cref{sec:example_213} for the quadratic field $\Q(\sqrt{213})$. \Cref{thm:quad_pts} is proved in \Cref{sec:fin_quad_pts}, and \Cref{sec:isogeny_graphs} carries out the computation of the ``graph of rational isogenies'', which is Part 2 of Mazur's strategy. Finally in \Cref{sec:further} we outline avenues for further work into this problem.

\ack{It is a pleasure to thank Jennifer Balakrishnan, Francesca Bianchi, Xavier Guitart, Barry Mazur, Philippe Michaud-Jacobs, Steffen M{\"u}ller, Jeroen Sijsling, Samir Siksek, Michael Stoll and Borna Vukorepa for helpful comments, discussions and correspondence. We thank John Cremona for suggesting improvements to the Sage implementation of some of our algorithms, and Andrew Sutherland for encouraging us to extend the range of quadratic fields to consider. We are grateful to the organisers of the `Modern Breakthroughs in Diophantine Problems' workshop, held in June 2022 at the Banff International Research Station in Banff, Canada, which provided the venue for all authors to meet together physically for the first time and to improve the results of the paper. We also thank the anonymous referees for their careful reading and comments on an earlier version of the manuscript.

The first author is grateful to the Simons Collaboration on Arithmetic Geometry, Number Theory, and Computation for being granted access to computational resources required for this project. The second author was supported by the QuantiXLie Centre of Excellence, a project co-financed by the Croatian Government and European Union
  through the European Regional Development Fund - the Competitiveness
  and Cohesion Operational Programme (Grant KK.01.1.1.01.0004) and by
  the Croatian Science Foundation under the project
  no. IP-2018-01-1313. The third author is supported by NSF grant DMS-1945452 and Simons Foundation grant \#550023. 
}

\section{Mazur's strategy}\label{sec:mazur_strategy}

Mazur's approach to settling the question of rational isogenies for \emph{all} $N$ (not just prime $N$) was based on the following notion: an integer $N$ is said to be \textbf{minimal of positive genus} if the genus of $X_0(N)$ is positive, but the genus of $X_0(d)$ is zero for all proper divisors $d$ of $N$. Note that if $N = 11$ or $N \geq 17$ is prime, then $N$ is minimal of positive genus (and in particular, the set of such integers is infinite).

The strategy then is to carry out the following two computational steps:

\begin{enumerate}[label=(\alph*)]
\item\label{step:min_pos_gen}
one determines $X_0(N)(\Q)$ for all $N$ which are minimal of positive genus, and 
\item\label{step:isogeny_graphs}
for every elliptic curve $E/\Q$ possessing a rational $N$-isogeny, with $N$ minimal of positive genus, one determines the ``graph of rational isogenies'' of $E$.
\end{enumerate}

Here, in Step~\ref{step:isogeny_graphs}, by ``graph of rational isogenies of $E$'' one means to construct the finitely many elliptic curves which are isogenous to $E$ via a rational isogeny, as well as determine the isogenies of minimal degree between the curves in this isogeny class. See \cite[Section 3.8]{cremona1997algorithms} for more on the computations for this over $\Q$.

It is not immediately clear why Step~\ref{step:min_pos_gen} is a finite computation, for two reasons:

\begin{enumerate}
    \item as noted above, there are infinitely many $N$ that are minimal of positive genus;
    \item for a given $N$ that is minimal of positive genus, $X_0(N)(\Q)$ could still be infinite.
\end{enumerate}

For the first point here, we can say the following.

\begin{lemma}\label{lem:desc_min_pos_gen}
    Let $N$ be minimal of positive genus.
    
    \begin{enumerate}
        \item If $N$ is prime, then $N = 11$ or $N \geq 17$.
        \item If $N$ is composite, then $N$ is supported at primes in the set $\left\{2,3,5,7,13\right\}$, and in particular belongs to a finite and explicitly computable set.
    \end{enumerate}
\end{lemma}

\begin{proof}
If $N$ is prime, then the second condition for $N$ being minimal of positive genus is empty, so $N$ will be minimal of positive genus if and only if the genus of $X_0(N)$ is positive, i.e., if $N = 11$ or $N \geq 17$.

If $N$ is composite, then it cannot be divisible by a prime $p = 11$ or $\geq 17$ (because the genus of $X_0(p)$ is positive), so must be supported in the set $\left\{2,3,5,7,13\right\}$. Since the genus of $X_0(k)$ is guaranteed to grow when multiplying $k$ by sufficiently high powers of any prime, one can enumerate the composite $N$ that are minimal of positive genus, and obtain the list shown in \cite[Page 131]{mazur1978rational}.
\end{proof}

This then addresses the first point above: even though there are infinitely many $N$ that are minimal of positive genus, almost all of these are prime. To deal with these one employs Mazur's isogeny theorem in \cite{mazur1978rational} that determines $X_0(p)(\Q)$; in particular, one has that $X_0(p)(\Q)$ consists only of the two cusps if $p \notin \{11, 17, 19, 37, 43,$ $67, 163 \}$; and for $p$ in this set one has a small list of additional noncuspidal rational points (summarised in the table that opens \cite{mazur1978rational}).

To address the second point, one wants to prove the following.

\begin{lemma}\label{lem:finiteness_min_pos_gen}
    Let $N$ be minimal of positive genus. Then $X_0(N)(\Q)$ is finite.
\end{lemma}

\begin{proof}
    We have that the genus of $X_0(N)$ is positive. If it is strictly larger than $1$, then we conclude by Faltings' theorem. If it is of genus $1$, i.e. 
    $$N \in \left\{11, 14, 15, 17, 19, 20, 21, 24, 27, 32, 36, 49\right\},$$ then one directly computes that the elliptic curve $X_0(N)$ has rank $0$.
\end{proof}

\section{Generalising Mazur's strategy to number fields}\label{sec:mazur_strategy_develop}

When attempting to generalise Steps~\ref{step:min_pos_gen} and \ref{step:isogeny_graphs} of Mazur's method to a general number field $K$, one runs into the issue that \Cref{lem:finiteness_min_pos_gen} may no longer be true, since some of the elliptic modular curves $X_0(N)$ may have positive rank over $K$. This suggests that the notion of `minimal of positive genus' be replaced with the following.

\begin{definition}
Let $K$ be a number field. An integer $N$ is said to be \textbf{minimally finite for $K$} if the following conditions are satisfied:
\begin{enumerate}
    \item $X_0(N)(K)$ is finite;
    \item $X_0(d)(K)$ is infinite for all proper divisors $d$ of $N$.
\end{enumerate}
The set of such integers is denoted $\MF(K)$.
\end{definition}

\Cref{lem:finiteness_min_pos_gen} shows that this notion agrees with Mazur's `minimal of positive genus' notion for $K = \Q$, and with this new notion, Steps~\ref{step:min_pos_gen} and \ref{step:isogeny_graphs} become the following:

\begin{enumerate}[label=(\alph*)]
\item\label{step:min_fin}
one determines $X_0(N)(K)$ for all $N$ which are minimally finite for $K$, and
\item\label{step:isogeny_graphs}
for every elliptic curve $E/\Q$ possessing a rational $N$-isogeny, with $N$ minimally finite for $K$, one determines the ``graph of $K$-rational isogenies'' of $E$.
\end{enumerate}

As before, Step (b) is a trivial matter to deal with; indeed, all of Sage \cite{sagemath}, PARI/GP \cite{PARI} and Magma \cite{magma} have fast implementations for computing the isogeny graph of a given elliptic curve; see \Cref{sec:isogeny_graphs} for details on this.

We are thus concerned with carrying out Step~\ref{step:min_fin}, which requires an explicit description of the set $\MF(K)$. The following shows, as before, that there are finitely many composite values in this set.

\begin{lemma}\label{lem:minimally_finite}
    Let $K$ be a number field, and $N$ a positive integer.
    \begin{enumerate}
        \item If $N = 11, 17$ or $19$, then $N \in \MF(K)$ if and only if the rank over $K$ of the elliptic modular curve $X_0(N)$ is zero.
        \item If $N \geq 23$ is prime, then $N \in \MF(K)$.
        \item If $N$ is composite and $N \in \MF(K)$, then $N$ is supported at primes $\leq 19$, and in particular belongs to a finite and explicitly computable set.
    \end{enumerate}
\end{lemma}

\begin{proof}
    Part (1) is clear and Part (2) follows from Faltings's theorem. Suppose now that $N$ is composite and $N \in \MF(K)$. If $N$ were divisible by a prime $p 
    \geq 23$, then by definition of $N \in \MF(K)$ one would have that $X_0(p)(K)$ would be infinite, contradicting Faltings's theorem. The finiteness now follows as in the proof of \Cref{lem:desc_min_pos_gen} (i.e. the genus is guaranteed to grow upon multiplying the level by powers of primes, and then finiteness follows from Faltings's theorem).
\end{proof}

\subsection{Prime minimally finite values}\label{ssec:prime_mf}

To deal with the prime values one requires an analogue of Mazur's isogeny theorem for elliptic curves over $K$, which in this generality does not hold. Indeed, if $K$ contains the Hilbert class field of an imaginary quadratic field $L$, then for any rational prime $p \geq 23$ that splits in $L$ (of which there are infinitely many, of density $\frac{1}{2}$) there exists an elliptic curve $E/L$ with $\End(E) = \OO_L$ that admits a $p$-isogeny rational over the Hilbert class field $H_L$ of $L$, and hence rational over $K$.

Nevertheless it is well-known (see e.g. \cite[Corollary 2]{larson_vaintrob_2014}) that, assuming GRH, this is the only way to obtain infinitely many ``isogeny primes'' for $K$; i.e., if $K$ does not contain the Hilbert class field of an imaginary quadratic field, then the set of primes $p$ for which $X_0(p)$ admits noncuspidal $K$-rational points is finite. Computing a finite set $S(K)$ containing this set of primes is achieved via the program \emph{Isogeny Primes} explained in \cite{banwait2022explicit}, building on work of the first author \cite{banwait2021explicit}. (This program can sometimes, but not always, determine the set exactly.) This allows us to compute a finite list of prime values to be checked in Step~\ref{step:min_fin} as follows.

\begin{algorithm}\label{alg:prime_mf}
Given a number field $K$ not containing the Hilbert class field of an imaginary quadratic field, compute a finite set of primes $T(K)$ as follows.

\begin{enumerate}
    \item (GRH) Compute a superset $S(K)$ of the isogeny primes for $K$ via the program \emph{Isogeny Primes}.
    \item Set \textsf{Output} $:= \left\{p \in S(K) : p \geq 23\right\}$.
    \item (BSD) For $p = 11, 17$ or $19$, if the rank of $X_0(p)$ over $K$ is zero, then append $p$ to \textsf{Output}.
    \item Return \textsf{Output}.
\end{enumerate}
\end{algorithm}

\begin{remark}\label{rem:minimally_finite_assumptions}
    \begin{enumerate}
        \item It is in Step 1 here that we require GRH, to ensure finiteness of the set of isogeny primes for $K$. Specifically, one requires GRH to ensure finitely many isogenies of `Momose Type 2', as in Proposition 6.3 of \cite{banwait2022explicit}. Tracing through the proof of this result, one arrives at a point where one applies the Effective Chebotarev density theorem to a degree $4$ extension $E'$ over $K$. Thus, one only needs to assume GRH for degree $4$ extensions of $K$.
        \item Step 3 requires the rank part of the Birch--Swinnerton-Dyer (BSD) conjecture to honestly be part of an algorithm. Checking whether the analytic rank is $0$ is a finite computation, and if BSD is assumed, this tells us whether the (algebraic) rank is $0$. Moreover, the rank of an elliptic curve can in practice be computed unconditionally in the vast majority of cases. This can be done either by $2$-descent, or if the analytic rank is $0$ or $1$ by applying the results of Gross-Zagier \cite{GrossZagier} and Kolyvagin \cite{Kolyvagin1,Kolyvagin2} which state that the algebraic rank is then $0$ or $1$, respectively.  
        
        \item The primes $37$, $43$, $67$ and $163$ will necessarily always be in the output, because they are isogeny primes over $\Q$.
    \end{enumerate}
\end{remark}

Therefore, to carry out Step~\ref{step:min_fin} above for prime values $N$, we are required to assume that $K$ does not contain the Hilbert class field of an imaginary quadratic field (for otherwise it will not be a finite computation), and moreover one need only determine the finite set $X_0(N)(K)$ for $N$ in the finite set $T(K)$, since for $N \notin T(K)$ we know that either $N$ is not minimally finite (which can only happen if $N \leq 19$), or $X_0(N)(K)$ consists only of the two cusps.

\subsection{Composite minimally finite values}

We now determine the composite minimally finite values for a number field $K$, which we denote $\MFC(K)$; by \Cref{lem:minimally_finite} this set is finite and explicitly computable. We first identify the set of composite levels $N$ that necessarily arise in Step (a) for any number field $K$. We refer to these levels $N$ as \emph{composite always minimally finite}, and denote this set by $\AMFC$.

\begin{lemma}\label{prop:minimally_finite_values}
We have
\[ \AMFC = \left\{26, 35, 39, 50, 65, 91, 125, 169\right\}. \]
\end{lemma}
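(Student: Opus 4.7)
The plan is to unwind the quantifier implicit in $\AMF = \bigcap_K \MF(K)$ (over all admissible $K$, i.e.\ those not containing the Hilbert class field of an imaginary quadratic field) into arithmetic conditions on $N$ alone, and then to enumerate the resulting short finite list. I would show that $N \in \AMF$ if and only if (i) $g(X_0(N)) \ge 2$; (ii) every proper divisor of $N$ lies in the classical genus-$0$ list $\{1,2,3,4,5,6,7,8,9,10,12,13,16,18,25\}$; and (iii) every prime factor of $N$ lies in Mazur's list $\{2,3,5,7,11,13,17,19,37,43,67,163\}$ of rational isogeny primes.

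For (i), Faltings' theorem gives that $g(X_0(N)) \ge 2$ forces $X_0(N)(K)$ finite for every $K$; conversely, $X_0(N)$ of genus $0$ is $\mathbb{P}^1_{\mathbb{Q}}$ with infinitely many $K$-points for every $K$, and if $X_0(N)$ has genus $1$ one can choose a quadratic $K = \mathbb{Q}(\sqrt d)$ (generic $d$ being admissible) so that a twist of the elliptic curve $X_0(N)$ has positive rank over $K$. For (ii), the genus-$0$ direction is immediate, while for the converse evaluating at $K = \mathbb{Q}$ (which is admissible) suffices: every genus-$1$ $X_0(d)$ is classically known to have Mordell--Weil rank $0$ over $\mathbb{Q}$, and Faltings handles genus $\ge 2$. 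For (iii), evaluating once more at $K = \mathbb{Q}$ forces every prime factor of $N$ to admit a rational $p$-isogeny, hence to lie in Mazur's list, and conversely any such isogeny base-changes to every $K$.

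The final step is a short finite enumeration. Any prime $p \in \{17, 19, 37, 43, 67, 163\}$ satisfies $g(X_0(p)) \ge 1$, so it can only occur as $N = p$ itself, and condition (i) selects the four values $\{37, 43, 67, 163\}$. The prime $11$ is excluded entirely: $N = 11$ fails (i) since $g(X_0(11)) = 1$, while $N > 11$ with $11 \mid N$ fails (ii) because $11$ would then be a proper divisor of $N$ outside the genus-$0$ list. All remaining candidates have prime factors in $\{2, 3, 5, 7, 13\}$, and condition (ii) bounds $N$ tightly; running through prime-power, two-prime and three-prime factorisations then quickly yields $\{26, 35, 39, 50, 65, 91, 125, 169\}$. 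Combined with the previous four primes this produces the twelve claimed values. The only real subtlety is the genus-$1$ half of (i), where one must exhibit an admissible $K$ over which the elliptic curve $X_0(N)$ acquires positive rank; this follows from standard results on ranks of quadratic twists and causes no genuine difficulty.
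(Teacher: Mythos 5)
Your proposal is correct and follows essentially the same route as the paper: reduce membership in $\AMF$ to genus conditions on $X_0(N)$ and its proper divisors plus the prime-support condition tested at $K=\Q$, then enumerate. The only differences are presentational — the paper cites Mazur's published list of composite integers that are minimal of positive genus rather than re-deriving it by hand as you do, and you make explicit the (correct, but left implicit in the paper) point that a genus-one $X_0(N)$ acquires positive rank over some admissible quadratic field and so cannot lie in $\AMF$.
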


\begin{proof}
The set of integers $N$ which are minimally finite for \emph{every} number field must have the property that $X_0(N)(K)$ is finite for every number field $K$, but for every proper divisor $d \mid N$, $X_0(d)(K)$ is infinite. By considering the genera of $X_0(N)$ and $X_0(d)$, one sees that the set of such levels $N$ is precisely the subset of the composite integers identified by Mazur as being minimal of positive genus, with the further restriction that $X_0(N)$ does not have genus $1$. Mazur's list is given as 
\[14, 15, 20, 21, 24, 26, 27, 32, 35, 36, 39, 49, 50, 65, 91, 125, 169\]
from which one obtains $\AMFC$ as in the statement above.
\end{proof}

Next, we describe a procedure which allows one to determine $\MFC(K)$ for a given number field $K$.

\begin{algorithm}
Given a number field $K$, compute $\MFC(K)$ as follows.

\begin{enumerate}
    \item \label{step:1} (Genus 1) Compute the set of $N$ for which $X_0(N)$ is an elliptic curve with positive rank over $K$; call this set $B(K)$; denote by $S_1(K)$ the composite values in the complementary set of $N$ for which $X_0(N)$ is an elliptic curve with rank $0$ over $K$.
    \item \label{step:2} (Genus $\geq 2$) Compute the set of products $pb$ for $$p \in \left\{2,3,5,7,11,13,17,19\right\}$$ and $b \in B(K)$; call this set $S_2(K)$;
    \item \label{step:3} Set \textsf{Output} $:= S_1(K) \cup S_2(K) \cup \AMFC$;
    \item \label{step:4} (Remove multiples) Remove multiples from \textsf{Output} (that is, values $y$ in \textsf{Output} for which there exists an $x$ in \textsf{Output} such that $x$ divides $y$);
    \item Return \textsf{Output}.
\end{enumerate}
\end{algorithm}

\begin{proof}

We need to prove that the output of the algorithm is indeed $\MFC(K)$ as claimed.

We first consider minimally finite $N$ for which $X_0(N)$ has genus $1$. This means that $X_0(N)$ is an elliptic curve with rank $0$ over $K$. There are only twelve levels $N$ for which $X_0(N)$ is an elliptic curve, viz.
\begin{equation}\label{eqn:genus_1}
11,14,15,17,19,20,21,24,27,32,36,49,
\end{equation}
and it is readily observed that for each integer $N$ in this list, any proper divisor $d$ of $N$ is such that $X_0(d)$ has genus $0$, so in particular admits infinitely many $K$-rational points. Thus the levels $N$ in this list for which $X_0(N)$ has rank $0$ over $K$ are indeed minimally finite for $K$; this accounts for the set $S_1(K)$ in Step~\ref{step:1}.

We next consider minimally finite composite $N$ for which $X_0(N)$ has genus $> 1$. We observe that the only primes which may divide $N$ are the primes $\leq 19$, since primes $p \geq 23$ are such that the genus of $X_0(p)$ is at least $2$ (and hence admit only finitely many $K$-rational points).

By definition of $N$ being minimally finite, we have that $X_0(d)(K)$ is infinite for all proper divisors $d$ of $N$. There are two ways this could happen: either all of the $X_0(d)$ have genus 0; or at least one of them has genus $1$ and has positive rank over $K$. The first case yields integers which must be contained in $\AMFC$, so we are further reduced to considering the levels $N$ which are multiples of the integers in $B(K)$ computed in Step~\ref{step:1}. Moreover, since any multiple of the integers in \ref{eqn:genus_1} yields an $N$ for which $X_0(N)$ has genus at least $2$, we may restrict to considering only multiples of the integers in $B(K)$ by prime values which, as we observed earlier, are bounded by $19$. This explains the computation happening in Step~\ref{step:2}.

Step~\ref{step:4} is required since the set \textsf{Output} computed in Step~\ref{step:3} may contain multiples within it (that is, one member is a proper multiple of another). These proper multiples are clearly not minimal of positive genus, so are removed before being returned.
\end{proof}

\begin{remark}
\begin{enumerate}
    \item As before, strictly speaking this is only an algorithm upon assumption of the Birch--Swinnerton-Dyer conjecture.
    \item The above algorithm is implemented as \path{minimally_finite(d)} in \path{sage_code/utils.py}.
\end{enumerate}
\end{remark}

\subsection{Unifying the prime and composite minimally finite values}

Having separately treated the prime and composite values, it is helpful to combine the various sets of integers into a single finite list containing the values $N$ that need to be checked in Step~\ref{step:min_fin} of Mazur's strategy. To ensure finiteness of the list of primes to be checked, we restrict the number field as in \Cref{ssec:prime_mf}.

\begin{definition}
    \begin{enumerate}
        \item We define the set of \textbf{always minimally finite} values to be $\AMF := \AMFC \cup \left\{37, 43, 67, 163\right\}$.
        \item For a number field $K$ not containing the Hilbert class field of an imaginary quadratic field, we define the \textbf{finite part of the minimally finite values of $K$} to be \[ \MF^f(K) := \MFC(K) \cup T(K), \]
        where $T(K)$ is the set output by \Cref{alg:prime_mf}.
    \end{enumerate}
\end{definition}

We have defined $\AMF$ to make clear which values will arise in Step~\ref{step:min_fin} for every number field $K$; having a description of higher degree points on the modular curves for the levels in $\AMF$ therefore becomes of some value to the project of uniformly classifying isogenies of elliptic curves over higher degree number fields. 

\begin{example}
Running the program \emph{Isogeny Primes} for the two quadratic fields $\Q(\sqrt{-5})$ and $\Q(\sqrt{5})$ yields that the sets $T(K)$ in each case are $\left\{11, 17, 19, 23\right\}$ and $\left\{11, 19, 23,47\right\}$ respectively. Therefore, running \path{minimally_finite(d)} for $d =-5$ and $5$ we obtain the following values to be checked for each $K$.
\begin{equation}\label{ex:pm5}
  \begin{aligned}
\MF^f(\Q(\sqrt{-5})) &= \AMF \cup \left\{11, 15, 17, 19, 20, 23, 28, 36, 42, \right.\\
&\ \left.48, 54, 63, 64, 81, 98, 147, 343\right\}.\\
\MF^f(\Q(\sqrt{5})) &= \AMF \cup \left\{11, 14, 15, 19, 20, 21, 23, 24, 34,\right.\\
&\ \left.36, 47, 49, 51, 54, 64, 81, 85, 119, 221, 289\right\}.
\end{aligned}
\end{equation}
\end{example}

\section{Searching for convenient quadratic fields}\label{sec:search_convenient}

\Cref{ex:pm5} shows that, to carry out Mazur's strategy for the two quadratic fields $K = \Q(\sqrt{5})$ and $K = \Q(\sqrt{-5})$, it is necessary to determine the $K$-rational points on large genus modular curves such as $X_0(289)$ and $X_0(343)$. Unfortunately, the large genera of these curves is a significant obstacle to employing several of the known methods for determining quadratic points on modular curves.

For this reason we searched through all squarefree integers $-10{,}000 < d< 10{,}000$ and returned $d$ if the following conditions were satisfied:

\begin{enumerate}
    \item $\Q(\sqrt{d})$ is not an imaginary quadratic field of class number one (which ensures that the set of cyclic isogeny degrees is finite);
    \item the values in $\MF^f(\Q(\sqrt{d}))$ larger than 100 are only either the unavoidable $125$, $163$ and $169$, or are values $N$ for which all of the quadratic points on $X_0(N)$ have been determined;
    \item the Mordell-Weil group of the Jacobian of the modular curve $X_0^+(163)$ does not grow when base-extended from $\Q$ to $\Q(\sqrt{d})$.
\end{enumerate}

The third filter here has been employed to enable the `No growth in plus-part' method explained in \Cref{sec:methods_overview} to successfully deal with the case of $163$, which is otherwise a difficult case to surmount. Values $d$ surviving these filters are the `convenient' values mentioned in the Introduction, since the subsequent task of determining all $\Q(\sqrt{d})$-rational points on the finitely many resulting modular curves is made somewhat easier to carry out.

The search algorithm is implemented as \path{search_convenient_d} in \path{sage_code/utils.py}; running it for the range described above yields 133 convenient values of $d$, the smallest of which is $-9946$, the largest is $9995$, and only 26 of which are negative. One possible explanation for the paucity of negative values is that the elliptic modular curve $X_0(49)$ has positive rank over the vast majority of imaginary quadratic fields in our range ($5{,}434$ out of $6{,}083$), while it has rank zero over the majority of real quadratic fields in the range ($4{,}610$ out of $6{,}082$). This means that the value $7^3 = 343$ appears in $\MF^f(\Q(\sqrt{d}))$ far more often for negative values of $d$ than for positive values, and hence (because we do not have a description of all quadratic points on $X_0(343)$) is deemed not convenient by Condition (2) above. We know no good reason why one would expect the rank of $X_0(49)$ over quadratic fields to behave thus.

\section{Overview of the methods used}\label{sec:methods_overview}

This section gives an overview of the methods we employ to determine the $K$-rational points on modular curves (for $K$ a quadratic field). The following notation will be used:
\begin{align*}
  &\begin{aligned}
    \mathllap{K} &= \text{a quadratic field;}\\
  \end{aligned}\\
   &\begin{aligned}
    \mathllap{J_0(N)} &= \text{Jacobian variety of } X_0(N);\\
  \end{aligned}\\
  &\begin{aligned}
    \mathllap{w_N} &= \text{Atkin-Lehner involution on } X_0(N);\\
  \end{aligned} \\
    &\begin{aligned}
    \mathllap{X_0^+(N)} &=X_0(N)/ \langle w_N \rangle;\\
  \end{aligned} \\
    &\begin{aligned}
    \mathllap{J_0(N)_+} &= \text{the sub-abelian variety } (1+w_N)J_0(N);
  \end{aligned}\\
   &\begin{aligned}
    \mathllap{J_0(N)_-} &= \text{the sub-abelian variety } (1-w_N)J_0(N);
  \end{aligned}\\
    &\begin{aligned}
    \mathllap{J_0^+(N)} &= \text{the quotient abelian variety } J_0(N)/J_0(N)_-;
  \end{aligned}\\
     &\begin{aligned}
    \mathllap{J_0^-(N)} &= \text{the quotient abelian variety } J_0(N)/J_0(N)_+.
  \end{aligned}
\end{align*}
Thus $J_0^+(N)$ (respectively $J_0^-(N)$) are quotients of $J_0(N)$ on which $w_N$ acts as $+1$ (respectively $-1$). Moreover, from \cite[Chapter II Section 10]{mazur1977modular}, we have that $J_0^+(N)$ and $J_0(N)_+$ are isomorphic as abelian varieties over $\Q$.

\subsection{Quotient Method}
Given an explicit finite map between curves $\varphi : C \rightarrow D$ defined over $K$ (where a curve is a separated, geometrically integral scheme of dimension 1 of finite type), whenever we are able to determine the $K$-rational points on $D$, we can compute their preimages and so determine $C(K)$. A particularly convenient case is when $D$ is an elliptic curve of rank $0$ over $K$. This is used to deal with $N = 37$ in \Cref{ssec:hyperelliptic}.

\subsection{Catalogues of quadratic points}\label{ssec:quad_pts_catalogue}

We use existing classifications of quadratic points on low-genus modular curves, due to several independent works in recent years; in order of appearance, these are: Bruin-Najman \cite{bruin_najman2016}, Özman-Siksek \cite{ozman2019quadratic}, Box \cite{box2021quadratic}, Najman-Vukorepa \cite{NajmanVukorepa}, and Vukorepa \cite{Vukorepa}. In short, quadratic points on $X_0(N)$ are classified \textbf{non-exceptional} or \textbf{exceptional} according to whether or not they arise as pullbacks of $\Q$-rational points on a quotient $X_0(N)/ \langle w_d \rangle$ ($d$ is usually chosen such that the quotient is of minimal genus). Apart from $N = 37$ which is known to be a special case and is treated separately in Box's paper, there are only finitely many exceptional quadratic points, and the aforementioned works determine all such. In some cases, there are only finitely many quadratic points at all, and these are completely determined in those cases.

\subsection{Özman sieve}
Even when there are infinitely many non-exceptional quadratic points on $X_0(N)$, it is sometimes possible to rule out the existence of such points over fixed quadratic fields. One method for this is based on a result of \"{O}zman \cite{ozman2012points}, and is explained in greater detail as Proposition 7.3 in \cite{banwait2021explicit}. This is used for $N = 65$ in \Cref{ssec:non-hyperelliptic}. The package \emph{Isogeny Primes} mentioned in \Cref{alg:prime_mf} contains an implementation of the Özman sieve.

\subsection{Trbovi\'{c} filter}\label{ssec:najman_trbovic_filter}
Another method to rule out possible non-exceptional quadratic points on $X_0(N)$ is based on work of the second author with Trbovi\'{c} \cite[Theorem 2.13]{najman2021splitting}, and applies in the case that $X_0(N)$ is hyperelliptic of genus $\geq 2$. The aforementioned result lists the primes less than $100$ which must be unramified in any quadratic field $K$ such that $X_0(N)$ admits a $K$-rational point which is not a $\Q$-rational point. These unramified primes have been encoded into \path{sage_code/quadratic_points_catalogue.json}. We refer to this method as the \textbf{Trbovi\'{c} filter}, and use it to deal with all hyperelliptic values we need to consider apart from $N = 37$.

\subsection{The `No growth in plus-part' method}

This is analogous to the `No growth in minus-part' method of \cite[Lemma A.2]{banwait2021explicit}, and may be summarised as follows.

\begin{proposition}\label{prop:no_growth_plus_part}
Let $K$ be a quadratic field, and $N$ an integer such that:
\begin{enumerate}
    \item \label{cond:pos_genus} $X^+_0(N)$ has positive genus;
    \item \label{cond:no_growth_tors} $J_0^+(N)(K)$ has trivial torsion;
    \item \label{cond:no_growth_rank} $\rk(J_0^+(N)(K)) = \rk(J_0^+(N)(\Q))$.
\end{enumerate}
Then:
\begin{enumerate}
    \item $J_0^+(N)(K) = J_0^+(N)(\Q)$;
    \item $X_0^+(N)(K) = X_0^+(N)(\Q)$;
    \item any $K$-rational point on $X_0(N)$ arises as the pull-back of a $\Q$-rational point on $X_0^+(N)$.
\end{enumerate}
\end{proposition}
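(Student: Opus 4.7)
The plan is a Galois descent that closely mirrors the ``No growth in minus-part'' method of \cite[Lemma A.2]{banwait2021explicit}. Let $\sigma$ denote the non-trivial element of $\Gal(K/\Q)$ and write $\pi\colon X_0(N)\to X_0^+(N)$ for the Atkin--Lehner quotient map. Given $P\in X_0(N)(K)$, set $Q:=\pi(P)\in X_0^+(N)(K)$; the goal is to show that $\sigma(Q)=Q$, so that $Q\in X_0^+(N)(\Q)$ and $P$ indeed arises as the pull-back of a $\Q$-rational point.

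The first step is to form the degree-zero $K$-rational divisor $D:=Q-\sigma(Q)$ on $X_0^+(N)$ and consider its class $[D]$ in the Mordell--Weil group $J(K)$, where I write $J:=J(X_0^+(N))$. By the identification $J\cong J_0^+(N)$ over $\Q$ recorded earlier in the text (via \cite[Chapter II Section 10]{mazur1977modular}), hypotheses (2) and (3) of the proposition transfer directly to statements about $J$.

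Next I would show that $[D]=0$. Since $\sigma\cdot[D]=[\sigma(Q)-Q]=-[D]$, the class $[D]$ lies in the $(-1)$-eigenspace $J(K)^{-}:=\{x\in J(K):\sigma x=-x\}$. Because $\sigma$ is an involution, after tensoring with $\Q$ one has the eigenspace decomposition
\[
J(K)\otimes\Q \;=\; \bigl(J(\Q)\otimes\Q\bigr)\oplus\bigl(J(K)^{-}\otimes\Q\bigr),
\]
so $\rk J(K)^{-}=\rk J(K)-\rk J(\Q)$, which vanishes by hypothesis (3). Hence $J(K)^{-}$ is torsion, and hypothesis (2) then forces $J(K)^{-}=0$, giving $[D]=0$ in $J(K)$, that is, $Q$ and $\sigma(Q)$ are linearly equivalent on $X_0^+(N)$.

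To conclude, I would invoke the standard fact that two distinct points on a curve of positive genus are never linearly equivalent (otherwise their difference would be the divisor of a degree-$1$ rational function, forcing the curve to be $\PP^{1}$); hypothesis (1) therefore yields $Q=\sigma(Q)$, as desired. The only delicate point I foresee is the bookkeeping around the Mazur identification $J\cong J_0^+(N)$ and the resulting translation of torsion and rank hypotheses, but this is immediate from the cited result; the remainder of the argument is routine Galois descent.
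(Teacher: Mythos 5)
Your proof is correct and follows essentially the same route as the paper's: both arguments use hypotheses (2) and (3) to show that $J_0^+(N)$ acquires no new points over $K$, and then use positive genus (injectivity of the Abel--Jacobi map, equivalently the non-existence of a degree-one function) to descend this to the curve $X_0^+(N)$. The only cosmetic difference is that the paper proves $J(K)=J(\Q)$ outright via an integer-matrix argument on generators of the two Mordell--Weil groups and then applies the Abel--Jacobi map to an arbitrary $K$-point, whereas you kill the specific class $[Q-\sigma(Q)]$ by showing the $(-1)$-eigenspace of Galois is trivial; under the trivial-torsion hypothesis these are equivalent.
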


\begin{proof}
For ease of notation in this proof, we write $J = J_0^+(N)$, which by condition~\ref{cond:pos_genus} is non-trivial. Conditions \ref{cond:no_growth_tors} and \ref{cond:no_growth_rank} tell us that $J(K)$ and $J(\Q)$ are isomorphic as abstract groups. We begin by showing that, in fact, they are equal as sets.

Let $r = \rk J(\Q) = \rk J(K)$, let $V = [v_1,\dots,v_r]$ be a vector of generators of $J(\Q)$, and let $W = [w_1,\dots,w_r]$ be a vector of generators for $J(K)$. Then there exists an $r \times r$ matrix $M$ with coefficients in $\Z$ such that $M W = V$. The matrix $M^{-1}$ may not have coefficients in $\Z$, but there exists an integer $d$ such that $dM^{-1}$ has integer coefficients. Then
$$dW = (dM^{-1}) V,$$
so $dw_i = d \overline{w_i} \in J(\Q)$, where $\overline{w_i}$ denotes the Galois conjugate of $w_i$. Then $d(w_i -\overline{w_i}) = 0$, which implies that either $w_i = \overline{w_i}$ or we have a torsion point. Since the torsion of $J(K)$ is trivial, we conclude that $J(K) = J(\Q)$ as sets, and hence establishing the first conclusion.

Writing $C$ for $X_0^+(N)$, this implies that $C(K) = C(\Q)$. Indeed, fixing a $\Q$-rational point $P$ of $C$ (e.g. the image of any of the $\Q$-rational cusps of $X_0(N)$) and considering the Abel-Jacobi map
\[ \iota_P : C \rightarrow J, \quad Q \mapsto [Q - P], \]
we see that, if $Q \in C(K)$, then $\iota_P(Q) = [Q - P] \in J(K) = J(\Q)$, and hence $Q \in  C(\Q)$, thereby establishing (2). Therefore, we find that any $K$-rational point on $X_0(N)$ in fact arises as the pullback of a $\Q$-rational point on $X_0^+(N)$ under the natural map $X_0(N) \to X_0^+(N)$.
\end{proof}

This is used to deal with $N = 37$ in \Cref{ssec:hyperelliptic}, and $N = 43$ and $163$ in \Cref{ssec:non-hyperelliptic}. The reason we call this the `no growth in plus-part' method is because condition \ref{cond:no_growth_tors} is equivalent to $J_0^+(N)$ gaining no new torsion when base-changed from $\Q$ to $K$; indeed, that $J_0^+(N)$ has only trivial $\Q$-torsion is a theorem of Mazur \cite[Theorem 3]{mazur1977modular}.

\subsection{Symmetric Chabauty with Mordell-Weil sieve}
The determination of all quadratic points on $X_0(N)$ for $N=125$ and $N=169$ is approached using the Box-Siksek method as developed in \cite{box2021quadratic}, and using the improvements developed in \cite{NajmanVukorepa}. The Box-Siksek method is based on a combination of Siksek's relative Symmetric Chabauty method as developed in \cite{siksek} together with a Mordell-Weil sieve. We will give more details of this in \Cref{sec:fin_quad_pts} for our particular setup.

\section{An example - Cyclic isogenies over \texorpdfstring{$\Q(\sqrt{213})$}{Qsqrt213}}\label{sec:example_213}
To illustrate how the methods in the previous section may be used to carry out Step (a) of Mazur's strategy, we provide an extended example with the smallest absolute value of $d$ for which we were able to successfully determine all cyclic isogenies over $\Q(\sqrt{d})$, namely, $d = 213$. The set of minimally finite values to be considered here is as follows.
\begin{equation}\label{ex:213}
  \begin{aligned}
\MF^f(\Q(\sqrt{213})) &= \AMF \cup \left\{11, 14, 17, 19, 21, 30, 40, 45, 48,\right.\\
&\ \left.49, 54, 64, 72, 75, 81\right\}.
\end{aligned}
\end{equation}

Since the determination of rational points on $X_0(N)$ depends in large part on the geometry of the modular curve, we have split the values of $N$ to be considered according to whether the curve is elliptic, hyperelliptic, or non-hyperelliptic. Throughout this section we set $K = \Q(\sqrt{213})$.

\subsection{The elliptic cases}\label{ssec:elliptic}
Here we deal with the values of $N$ in $\MF^f(K)$ for which $X_0(N)$ is an elliptic curve.

\begin{lemma}\label{prop:elliptic}
Let $K = \Q(\sqrt{213})$, and let $N$ be an integer such that $X_0(N)$ has genus 1, viz.
\[11,14,15,17,19,20,21,24,27,32,36,49.\]
Then $J_0(N)(K)_{tors} = J_0(N)(\Q)_{tors}$. In particular, for the genus one values $N$ in $\MF^f(K)$ in \Cref{ex:213}, the $j$-invariants of $K$-rational points on $X_0(N)$ are the same as over $\Q$, which are given in \Cref{tab:elliptic}.

\begin{table}[htp]
\begin{center}
\begin{tabular}{|c|c|}
\hline
$N$ & $j(X_0(N)(\Q))$\\
\hline
$11$ & $-32768, -24729001, -121$\\
$14$ & $-3375, 16581375$\\
$17$ & $\frac{-882216989}{131072}, \frac{-297756989}{2}$\\
$19$ & $-884736$\\
$20$ & \ding{55}\\
$21$ & $\frac{-189613868625}{128}, \frac{3375}{2}, \frac{-140625}{8}, \frac{-1159088625}{2097152}$\\
$36$ & \ding{55}\\
$49$ & \ding{55}\\
\hline
\end{tabular}
\vspace{0.3cm}
\caption{\label{tab:elliptic}The finitely many $j$-invariants of elliptic curves corresponding to $\Q$-rational points on genus one modular curves $X_0(N)$. The symbol \ding{55} means that $X_0(N)(\Q)$ consists only of cusps.}
\end{center}
\end{table}
\end{lemma}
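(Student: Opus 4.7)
The twelve genus-$1$ levels are precisely those for which $X_0(N)$ is an explicit elliptic curve with a known Cremona label, each of rank $0$ over $\Q$ with well-documented torsion structure. The claim therefore reduces to verifying, for each such elliptic curve $E_N$, that $E_N(K)_{\mathrm{tors}} = E_N(\Q)_{\mathrm{tors}}$ for $K = \Q(\sqrt{213})$; the ``in particular'' claim about $j$-invariants then falls out as a corollary.

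For the torsion non-growth the cleanest tool is reduction modulo well-chosen primes: if $p$ is a rational prime inert in $K$ (i.e.\ $\left(\tfrac{213}{p}\right)=-1$) and of good reduction for $E_N$, then the prime-to-$p$ part of $E_N(K)_{\mathrm{tors}}$ injects into $\tilde{E}_N(\mathbb{F}_{p^2})$. For each $N$, choosing two such primes whose images have coprime cardinalities pins down the torsion inside $E_N(\Q)_{\mathrm{tors}}$. An equivalent route, useful as a cross-check, is to compute the quadratic twist $E_N^{(213)}$ and verify via the isogeny decomposition $E_N(K)\otimes\Q \cong (E_N(\Q)\otimes\Q)\oplus (E_N^{(213)}(\Q)\otimes\Q)$ that no new torsion appears on the twist. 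In practice one simply base-changes each $E_N$ to $K$ in Sage or Magma and reads off the torsion subgroup directly; this is the route taken in the accompanying repository.

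The ``in particular'' statement then follows quickly. If $N \in \MF(K)$ and $X_0(N)$ has genus $1$, then by definition of $\MF(K)$ the set $X_0(N)(K)$ is finite, so $X_0(N)(K) = X_0(N)(K)_{\mathrm{tors}}$. Combining with the torsion equality and the fact that $E_N$ already has rank $0$ over $\Q$ yields $X_0(N)(K) = X_0(N)(\Q)$. The non-cuspidal $j$-invariants are then read off from classical work of Mazur and Kenku (already recorded in \Cref{tab:q}) by evaluating the $j$-map on these rational points, reproducing the entries of \Cref{tab:elliptic}.

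There is no genuine obstacle here; the whole argument is routine finite computation on twelve explicit elliptic curves over a fixed quadratic field. The only mild care needed is to ensure that the chosen reduction primes are simultaneously inert in $K$ and of good reduction at $E_N$, and to rule out each small prime $\ell$ that could a priori appear in $E_N(K)_{\mathrm{tors}}$; by the Kamienny-Kenku-Momose classification of possible torsion over quadratic fields, only a very short list of candidate primes needs to be considered, and one pair of auxiliary inert primes per $N$ comfortably suffices.
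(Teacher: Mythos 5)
Your proposal is correct and matches the paper's approach: the paper's proof is exactly the routine computation you describe (base-changing each of the twelve elliptic curves to $\Q(\sqrt{213})$ and verifying no torsion growth via Magma, then reading off the $j$-invariants of the known $\Q$-rational points). One minor quibble: your ``cross-check'' via $E_N(K)\otimes\Q \cong (E_N(\Q)\otimes\Q)\oplus(E_N^{(213)}(\Q)\otimes\Q)$ concerns rank rather than torsion (tensoring with $\Q$ kills torsion), but this is an aside and does not affect your main argument.
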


\begin{proof}
The claims here are all readily achieved via Magma computation; for verifying no growth in torsion see the procedure \path{CheckTorsionGrowth}, and for the computation of $j$-invariants see \path{EllipticJInvs}, both in \path{magma_code/utils.m}
\end{proof}

\subsection{The hyperelliptic cases}\label{ssec:hyperelliptic}
In this section we deal with the minimally finite $N$ for which $X_0(N)$ is hyperelliptic of genus at least $2$, which are as follows:

\begin{equation}\label{list:hyperelliptic_213}
26, 30, 35, 37, 39, 40, 48, 50.
\end{equation}

\begin{proposition}\label{prop:hyperelliptic}
For $N$ as in \ref{list:hyperelliptic_213}, we have $X_0(N)(K) = X_0(N)(\Q)$. In particular, for $N \neq 37$, $X_0(N)(K)$ consists only of the cuspidal points, and $X_0(37)(K)$ admits two noncuspidal points defined over $\Q$, with corresponding $j$-invariants:
\[ -9317 \mbox{   and   } -162677523113838677.\]
\end{proposition}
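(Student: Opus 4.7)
The plan is to split the list \eqref{list:hyperelliptic_213} into the seven ``generic'' values $N \in \{26,30,35,39,40,48,50\}$ and the exceptional value $N=37$, since $X_0(37)$ is the one hyperelliptic modular curve of genus $\geq 2$ which is known to admit infinitely many quadratic points (arising from pullbacks of the infinitely many $\Q$-rational points on the positive-rank elliptic curve $X_0^+(37)$). For the seven generic values, I would apply the Trbovi\'{c} filter of \Cref{ssec:najman_trbovic_filter}. Concretely, since $213 = 3 \cdot 71$ and $213 \equiv 1 \pmod 4$, the discriminant of $K = \Q(\sqrt{213})$ is $213$, so the primes ramifying in $K/\Q$ are exactly $3$ and $71$. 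For each $N$ in the generic list I would look up the list of ``forced unramified'' primes stored in \path{sage_code/quadratic_points_catalogue.json}, and verify that at least one of $3$ or $71$ appears in it. By \cite[Theorem 2.13]{najman2021splitting} this immediately rules out non-exceptional $K$-rational quadratic points, and then I would consult the exceptional-point tables in \cite{bruin_najman2016, ozman2019quadratic, box2021quadratic, NajmanVukorepa, Vukorepa} to check (trivially) that none of the finitely many exceptional quadratic points on $X_0(N)$ have their field of definition equal to $K$. Together these two checks yield $X_0(N)(K) = X_0(N)(\Q)$, which in turn equals the cusps since none of these $N$ appear in \Cref{tab:q}.

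For $N=37$ a separate argument is needed because the Trbovi\'{c} filter does not apply. Here I would exploit the fact that the Jacobian decomposes as $J_0(37) \sim E_+ \times E_-$, where $E_+ = X_0^+(37)$ has rank $1$ and $E_-$ (isogenous to the elliptic curve $37b$) has rank $0$ over $\Q$. The plan is the Quotient Method of \Cref{ssec:quad_pts_catalogue}: compose $X_0(37) \hookrightarrow J_0(37) \twoheadrightarrow E_-$ to obtain a non-constant $\Q$-morphism $\varphi\colon X_0(37) \to E_-$, then verify in Magma that $E_-(K)$ is finite (equivalently, that $E_-$ neither gains torsion nor rank when base-changed from $\Q$ to $K$), and finally compute $\varphi^{-1}(E_-(K))$ to obtain a finite, explicit superset of $X_0(37)(K)$. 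A cross-check can be done via the No growth in plus-part method: if $X_0^+(37)(K) = X_0^+(37)(\Q)$ as sets (no torsion growth and no rank growth of the elliptic curve $X_0^+(37)$ over $K$), then by \Cref{prop:no_growth_plus_part} every $K$-rational point of $X_0(37)$ is the pullback of a $\Q$-rational point of $X_0^+(37)$, and the (already catalogued) relationship between $\Q$-points on $X_0^+(37)$ and their pullback fields pins down exactly the two $\Q$-rational noncuspidal points together with the cusps, giving the claimed $j$-invariants $-9317$ and $-162677523113838677$.

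The main obstacle is $N=37$. The other seven cases are essentially a bookkeeping exercise given the cited classifications and the encoded filter data. By contrast, the $N=37$ argument depends on delicate rank computations over $K$: establishing $\rk E_-(K) = 0$ (or the analogous $\rk X_0^+(37)(K) = 1$) for the specific quadratic field $K = \Q(\sqrt{213})$, which may require a full $2$-descent or an appeal to analytic rank methods together with BSD for rank $\leq 1$. Once that rank computation is in hand, the enumeration of preimages under $\varphi$, and the final identification with the known $\Q$-rational $j$-invariants, is routine.
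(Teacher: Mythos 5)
Your proposal matches the paper's proof essentially exactly: the Trbovi\'{c} filter (checked against the ramified primes $3$ and $71$ of $\Q(\sqrt{213})$) handles the seven values $N \neq 37$, and for $N=37$ the paper likewise uses the Quotient Method, mapping $X_0(37)$ onto the rank-zero elliptic curve $J_0(37)^-$ (Cremona label 37b1) and computing preimages of its finitely many $K$-points, with the identification of the two noncuspidal $\Q$-points being the classical Mazur--Swinnerton-Dyer result. Your proposed ``cross-check'' via the plus-part would not actually work as stated, since $X_0^+(37)$ has rank $1$ and hence infinitely many $\Q$-points whose pullbacks must still be sorted by field of definition, but this does not affect your primary (and correct) argument.
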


\begin{proof}
The Trbovi\'{c} filter (\Cref{ssec:najman_trbovic_filter}) applies to all of the values we need to consider apart from $N = 37$, and shows, for each such $N$ and $K$, that $X_0(N)(K) = X_0(N)(\Q)$. The verification of this may be found in \path{sage_code/hyperelliptic_verifs.py}. That the $\Q$-rational points on these modular curves consists only of the cusps was already known to Mazur prior to the appearance of \cite{mazur1978rational}.

For $N = 37$ we take the following model for the genus $2$ curve $X_0(37)$:
\[ X_0(37) : y^2 = -x^6 -9x^4-11x^2+37.\]
Taking the quotient of $X_0(37)$ by the isomorphism $(x,y) \mapsto (-x,-y)$ we obtain an elliptic curve $E/\Q$ of rank $0$ over $K$. (The elliptic curve $E$ is $J_0(37)^-$, with Cremona label 37b1.) By looking at the preimages of the finitely many points in $E(K)$ we may conclude that $X_0(37)(K) =  X_0(37)(\Q)$ (see the function \path{ComputePreimages} in \path{magma_code/X037.m}) for this verification). That this latter set admits only two noncuspidal points is a classical result of Mazur and Swinnerton-Dyer \cite[Proposition 2]{mazur1974arithmetic}.
\end{proof}

\subsection{The non-hyperelliptic cases}\label{ssec:non-hyperelliptic}
In this section we deal with the non-hyperelliptic minimally finite $N$, which are as follows:

\begin{equation}\label{list:nonhyperelliptic_213}
43, 45, 54, 64, 65, 67, 72, 75, 81, 91, 125, 163, 169.
\end{equation}

\begin{proposition}\label{prop:non-hyperelliptic}
For $N$ as in \ref{list:nonhyperelliptic_213}, we have $X_0(N)(K) = X_0(N)(\Q)$. In particular, for $N \notin \left\{43, 67, 163\right\}$, $X_0(N)(K)$ consists only of the cuspidal points, and for $N \in \left\{43, 67, 163\right\}$, $X_0(N)$ admits precisely one noncuspidal point, whose corresponding $j$-invariants $j_N$ are given in \Cref{tab:nonhyperelliptic_jinvs}.
\end{proposition}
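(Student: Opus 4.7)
The plan is to dispatch each $N$ in \ref{list:nonhyperelliptic_213} according to which of the methods of \Cref{sec:methods_overview} is most convenient for it. For the composite non-hyperelliptic levels $N \in \{45, 54, 64, 67, 72, 75, 81, 91\}$ I would rely on the quadratic-point catalogues cited in \Cref{ssec:quad_pts_catalogue}: for each such $N$ the exceptional quadratic points form an explicit finite list, which I would scan to check that none is defined over $K = \Q(\sqrt{213})$, while the non-exceptional points arise as pullbacks from a low-genus quotient $X_0(N)/\langle w_d \rangle$; ruling these out amounts to verifying that the quotient has no new $K$-rational points (a rank-comparison over $\Q$ versus $K$ when the quotient is elliptic, trivial when it has genus zero with only cuspidal $\Q$-points that have unramified fibres over $K$).

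For $N = 65$, I would invoke the Özman sieve via the \emph{Isogeny Primes} package, which eliminates non-cuspidal $K$-points through congruence conditions at small primes split in $K$. For $N = 43$ and $N = 163$, I would apply \Cref{prop:no_growth_plus_part}: verify in turn that $X_0^+(N)$ has positive genus (classical), that $J_0^+(N)(K)_{tors}$ is trivial (typically by reduction at several rational primes that split in $K$), and that $\rk J_0^+(N)(K) = \rk J_0^+(N)(\Q)$ (via an analytic rank calculation on the isogeny factors or a $2$-descent). The proposition then pulls every $K$-rational point of $X_0(N)$ back to a known $\Q$-rational point of $X_0^+(N)$, producing the $j$-invariants recorded in \Cref{tab:nonhyperelliptic_jinvs}; note that $43$ and $163$ survive the convenient-field filter precisely because the third condition of \Cref{sec:search_convenient} was imposed.

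For $N = 125$ and $N = 169$ I would simply quote \Cref{thm:quad_pts} and inspect \Cref{tab:quad_125} and \Cref{tab:quad_169}, confirming that none of the listed quadratic points is defined over $K = \Q(\sqrt{213})$; hence $X_0(N)(K)$ consists only of the cusps in both cases.

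The principal obstacle is not within this proposition itself but upstream, in the proof of \Cref{thm:quad_pts}: the symmetric Chabauty plus Mordell--Weil sieve computation on $X_0(125)$ and $X_0(169)$ is the most delicate step, requiring a careful choice of auxiliary split primes, an explicit generating set for the relevant Mordell--Weil groups (with provable saturation), and a uniformiser-free symmetric Chabauty set-up at each of the finitely many residue-class images surviving the sieve. Once that input is in hand from \Cref{sec:fin_quad_pts}, the present proposition reduces to routine bookkeeping across the four families above.
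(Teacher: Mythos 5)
Your overall decomposition (catalogues for the composite levels and for $67$, $91$; \"Ozman sieve for $65$; a no-growth argument for $43$ and $163$; \Cref{thm:quad_pts} for $125$ and $169$) matches the paper's proof for every level except $N=43$, and there your choice of tool fails. You propose applying \Cref{prop:no_growth_plus_part} to $N=43$, but the conclusion of that proposition is only that every $K$-rational point of $X_0(43)$ is a pullback of a $\Q$-rational point of $X_0^+(43)$ --- and $X_0^+(43)$ is an elliptic curve of \emph{positive rank} over $\Q$ (it is the curve $43$a), so it has infinitely many rational points. You would therefore be left with infinitely many candidate quadratic points, one pair above each point of $X_0^+(43)(\Q)$, and deciding which of them are defined over $K=\Q(\sqrt{213})$ is exactly the problem of finding rational points on the twist $X^{213}(43)$ --- the obstruction the paper explicitly identifies in \Cref{sec:further} as the reason many convenient $d$ could not be finished. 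The plus-part method is only decisive when $X_0^+(N)(\Q)$ is finite and known, which is why the paper reserves it for $N=163$ (where $X_0^+(163)(\Q)$ is determined by Ad\v{z}aga et al.). For $N=43$ the paper instead uses the `no growth in \emph{minus}-part' method of \cite[Lemma A.2]{banwait2021explicit}, checking $J_0(43)_-(K)=J_0(43)_-(\Q)$; that lemma concludes that any $K$-point not already rational must be a CM point with CM by an order in $\Q(\sqrt{-43})$, and since all such $j$-invariants are rational, the argument closes. (The overview in \Cref{sec:methods_overview} does say the plus-part method handles $43$, but the actual proof in \Cref{ssec:43} uses the minus part, and for the reason above it has to.)

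Two smaller points. First, your remark that ``$43$ and $163$ survive the convenient-field filter precisely because of the third condition'' is only half right: that condition concerns $J_0^+(163)$ alone and says nothing about $43$. Second, for the levels $45,54,64,72,75,81$ the \"Ozman--Siksek tables already list \emph{all} quadratic points (these curves have only finitely many), so no rank comparison on a quotient is needed; one simply checks that none of the listed points is defined over $\Q(\sqrt{213})$. Your reading of the remaining cases, including deferring $125$ and $169$ to the symmetric Chabauty plus Mordell--Weil sieve computation of \Cref{sec:fin_quad_pts}, agrees with the paper.
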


\begin{table}[htp]
\begin{center}
\begin{tabular}{|c|c|c|c|}
\hline
$N$ & $g$ & $j_N$ \\
\hline
$43$ & $3$ & $-884736000$ \\
$67$ & $5$ & $-147197952000$ \\
$163$ & $13$ & $-262537412640768000$ \\
\hline
\end{tabular}
\vspace{0.3cm}
\caption{\label{tab:nonhyperelliptic_jinvs}The $j$-invariants of the noncuspidal rational points on $X_0(43)$, $X_0(67)$, and $X_0(163)$.}
\end{center}
\end{table}

The proof will occupy the rest of the section. A number of claims are established via Sage or Magma computation; these can respectively be found in \path{sage_code/non_hyperelliptic_verifs.py} or \path{magma_code/non_hyperelliptic_verifs.m}.

\subsubsection{$N = 43$}\label{ssec:43}

We apply \cite[Lemma A.2]{banwait2021explicit}, where we use Steps 5--7 of \cite[Algorithm 7.9]{banwait2022explicit} to show that $J_0(43)_{-}(K) = J_0(43)_{-}(\Q)$. This shows that the only source of $K$-rational points on $X_0(43)$ beyond the $\Q$-rational points must correspond to elliptic curves with CM by an order in $\Q(\sqrt{-43})$. The inbuilt Sage function \path{cm_j_invariants} shows that the $j$-invariant of any such elliptic curve must be defined over $\Q$, whence we find that the only $K$-rational point on $X_0(43)$ corresponds to the $\Q$-rational point, which is a CM-point with discriminant $-43$, whose $j$-invariant is $-884736000$.

The code to verify the computational claims made here may be found in \path{sage_code/non_hyperelliptic_verifs.py}.

\subsubsection{$N = 45, 54, 63, 64, 72, 75, 81$}
This follows directly from \cite[Tables]{ozman2019quadratic}.

\subsubsection{$N = 65$}

\cite[Section 4.5]{box2021quadratic} shows that $X_0(65)$ admits no exceptional quadratic points, and the \"{O}zman sieve shows that it admits no non-exceptional $K$-rational points.

\subsubsection{$N = 67$}
\cite[Section 4.6]{box2021quadratic} determined all quadratic points on $X_0(67)$. From this we obtain that the only $K$-rational point on $X_0(67)$ is the one CM point defined over $\Q$, with $j$-invariant $-147197952000$.

\subsubsection{$N = 91$}
This is dealt by Vukorepa \cite{Vukorepa} where all the quadratic points on $X_0(91)$ are determined, and none are rational over $K$.

\subsubsection{$N = 163$}
We apply \Cref{prop:no_growth_plus_part} to show that any $K$-rational points on $X_0(163)$ must arise as pullbacks of $\Q$-rational points on $X_0^+(163)$. Conditions 2 and 3 are checked in \path{magma_code/NonHyperellipticVerifs.m}. \cite[Section 5.3]{advzaga2021quadratic} shows that $X_0^+(163)(\Q)$ consists only of one cusp, together with CM-points. Therefore, any pullback of these points under the hyperelliptic involution must themselves be either cuspidal or CM points. As in \Cref{ssec:43}, a quick Sage computation reveals that we have only the one $\Q$-rational CM-point known to Mazur.

\subsubsection{$N = 125, 169$}
These are dealt with in \Cref{sec:fin_quad_pts}.

This concludes Step (a) of Mazur's strategy for $\Q(\sqrt{213})$. Step (b) will be carried out in \Cref{sec:isogeny_graphs}.

\section{Quadratic points on $X_0(125)$ and $X_0(169)$}\label{sec:fin_quad_pts}

In this section we compute all the quadratic points on $X_0(125)$ and $X_0(169)$. To do this, we apply the approach developed by the second author with Vukorepa \cite{NajmanVukorepa}, which in turn builds on the results of Box \cite{box2021quadratic} and Siksek \cite{siksek}. 

In particular, we use the same approach which had been used in \cite[Section 7.5.]{NajmanVukorepa} to determine the quadratic points on $X_0(131)$. The idea is to show that all the quadratic points on $X_0(125)$ and $X_0(169)$ map to rational points on $X_0^+(125)$ and $X_0^+(169)$, respectively. After this, all there is to do is to explicitly compute the quadratic points by computing the pullbacks of the rational points (that have been determined previously), with respect to the quotient map $X_0(N)\rightarrow X_0^+(N)=X_0(N)/w_N$.

Let $N=125$ or $169$, let $D_\infty:=0+\infty$ the sum of the two rational cusps of $X_0(N)$, and write $X:= X_0(N)$ and $J(X):=J_0(N)$. Note that $w_N$ acts trivially on $D_\infty$. We use the divisor $D_\infty$ as the base point of the Abel--Jacobi map $\iota\colon X^{(2)} \hookrightarrow J(X)$, which is injective since $X$ is non-hyperelliptic.

Since the ranks of $J(X)(\Q)$ and $(1+w_N)(J(X))(\Q)$ are in these cases equal, we see that $(1-w_N)(J(X)(\Q)) \subseteq J(X)(\Q)_\mathrm{tors}$. Let $p$ be a prime of good reduction for $X$. The following commutative diagram describes the set-up of the sieve:
\begin{equation*}\label{SieveDiagram}  \begin{tikzcd}[sep = large]
X^{(2)}(\Q) \arrow[hook]{r}{\iota}  \arrow{d}{\red_p} &  J(X)(\Q)  \arrow{d}{\red_p} \arrow{r}{1-w_N} & J(X)(\Q)_\mathrm{tors} \arrow[hook]{d}{\red_p}
\\ 
X^{(2)}(\mathbb{F}_p) \arrow{r}{\tilde{\iota}} & J(X)(\mathbb{F}_p) \arrow{r}{1-\tilde{w}_N} &  J(X)(\F_p)
\end{tikzcd} \end{equation*}
Here, $\red_p$ denotes reduction mod $p$ (which we note is injective on $J(X)(\Q)_\mathrm{tors}$), and $\tilde{\iota}$ and $\tilde{w}_N$ are the reductions mod $p$ of $\iota$ and $w_N$ respectively. 

Given a hypothetical unknown quadratic point $Q \in X^{(2)}(\Q)$ that does not map to a rational point on $X_0^+(N)$, we first compute a set $S_Q \subseteq X^{(2)}(\mathbb{F}_p)$ of possibilities for $\red_p(Q)$. In order to construct the set $S_Q$, we consider each point $\red_p(R) \in X^{(2)}(\F_p)$ that is the reduction of a known non-pullback point $R \in X^{(2)}(\Q)$ with respect to (quotienting by) $w_N$. We attempt to prove that $\red_p(Q) \ne \red_p(R)$ by applying a symmetric Chabauty criterion as stated in \cite[Theorem~2.1]{box2021quadratic}. By the commutativity of the diagram, we then have that \[ ( (1-w_N) \circ \iota)(Q) \in W_p := \red_p^{-1}\big(((1-\tilde{w}_N) \circ \tilde{\iota} )(S_Q)\big) \subseteq J(X)(\Q)_\mathrm{tors}.\]
The set $W_p$ is explicitly computable, and although it is obtained by investigating matters modulo $p$, the information it gives is independent of the prime $p$. We aim to find a set of odd primes $\mathcal{P}$ of good reduction for our model such that \begin{equation*}
\bigcap_{p \in \mathcal{P}} W_p = [0] \in J(X)(\Q)_\mathrm{tors}.
\end{equation*}
If this is the case, then it follows that \[ ((1-w_N) \circ \iota)(Q) = (1-w_N)[Q-D_\infty] =  [0]. \]  It follows that $[Q-D_\infty]=w_N([Q-D_\infty])$ and hence, since $w_N$ acts trivially on $D_\infty$, we have that $[Q-w_N(Q)]=0$. Since $X_0(N)$ is non-hyperelliptic, it follows that $Q=w_N(Q)$ (as points in $X^{(2)}(\Q)$). Hence, $Q$ either arises from a pair of quadratic points, each of which is a fixed point of $w_N$, or $Q$ is the pullback of a rational point on $X_0(N)/w_N$ with respect to the quotient map $X_0(N) \rightarrow X_0(N)/w_N$. The sieve is successful using $\mathcal{P}=\{3,7\}$ for $N=125$ and $\mathcal{P}=\{3,5\}$ for $N=169$.


The curve $X_0^+(169)$ is isomorphic to $X_s(13)$, which is known to have 7 rational points by \cite[Theorem 1.1]{balakrishnan2019explicit} and the curve $X_0^+(125)$ is known to have 6 rational points \cite[Section 4]{ArulMuller}.

Hence, after computing the pullbacks of the rational points on $X_0^+(N)$, we have all the quadratic points on $X_0(N)$. Using data provided to us by the authors of \cite{CGPS21} and which can be obtained using \cite[Theorem 3.7]{CGPS21}, we can conclude over which quadratic fields there exist CM points and how many there are. This allows us to conclude that the only non-CM points are the points on $X_0(125)$ defined over $\Q(\sqrt{509})$. Their $j$-invariants are as given in the Introduction.

Our results are below. We list all the quadratic points, where $w$ denotes $\sqrt d$.

\subsection{$X_0(125)$}
Model for $X_0(125)$: 
{\scriptsize{
\noindent
\begin{align*} 
&x_1^2 - 10x_2x_3 + 10x_3x_4 - 9x_4^2 - 28x_4x_5 + 70x_4x_6 -19x_5^2 - 85x_6^2 - x_7^2 - 4x_8^2=0,\\
&x_1x_2 - 5x_2x_3 + 4x_3x_4 - x_4^2 - 14x_4x_5 + 23x_4x_6 -
    6x_5x_6 - 18x_6^2 - x_7x_8 - x_8^2=0,\\
&x_1x_3 - 2x_2x_3 - x_3^2 + 2x_3x_4 + 5x_3x_5 - x_4^2 -
    7x_4x_5 + 13x_4x_6 - 6x_5x_6 - 8x_6^2 - x_8^2=0,\\
&x_1x_4 - x_2x_3 + 3x_3x_5 - x_4x_5 + 5x_4x_6 - 4x_5x_6=0,\\
&x_1x_5 - x_3x_4 + 2x_3x_5 + x_4x_5 + x_4x_6 - x_5x_6 +
    x_6^2=0,\\
&x_1x_6 + x_3x_5 - x_4^2 + x_4x_5 + 2x_4x_6 - 2x_5x_6 +
    2x_6^2=0,\\
&x_1x_8 - x_2x_7 + x_3x_7 - x_4x_7 + x_4x_8 + x_5x_7 -
    x_6x_7 + 3x_6x_8=0,\\
&x_2^2 - 2x_2x_3 - x_3^2 + 2x_3x_4 + 2x_3x_5 - x_4^2 - 8x_4x_5
    + 14x_4x_6 - x_5^2 - 2x_5x_6 - 14x_6^2 - x_8^2=0,\\
&x_2x_4 - x_3^2 + 3x_4x_5 - 4x_4x_6 + 4x_6^2=0,\\
&x_2x_5 - x_4^2 + 2x_4x_5 + x_4x_6 - x_5^2 - x_5x_6 - x_6^2=0,\\
&x_2x_6 - x_3x_5 + x_4x_5 - x_4x_6 + x_5x_6=0,\\
&x_2x_8 - x_3x_7 + x_4x_7 - x_4x_8 + x_6x_7=0,\\
&x_3x_6 - x_4x_5 + x_4x_6 - 2x_6^2=0,\\
&x_3x_8 - x_4x_7 + x_4x_8 - 2x_6x_8=0,\\
&x_5x_8 - x_6x_7=0.
\end{align*}
}}

\noindent Genus of $X_0(125)$: $8$.

\noindent Genus of $X_0^+(125)$: $2$.

\noindent Rational cusps: $P_1:=(1:0 : 0 : 0 : 0 : 0 : 1 : 0), P_2:=(-1:0 : 0 : 0 : 0 : 0 : 1 : 0)$.

\noindent Torsion group of $J_0(125)(\Q):\ \Z/25\Z \cdot [P_1-P_2]$.

\noindent Quadratic points (up to conjugation): See \Cref{tab:quad_125}.
{\scriptsize{
\begin{table}
\begin{center}
\begin{tabular}{ccccc}
Name & $d$ & Coordinates & $j$-invariant & CM \\
\hline
$P_1$ & $-1$ & $(-2w : w/2 : w : w : w/2 : w/2 : 1 : 1)$ & $287496$ & $-16$\\
$P_2$ & $509$ & $(-38w/509 : 21w/509 : -12w/509 : 5w/509 : w/509 : -w/509 : -1 : 1)$ & $j_{509}$ & NO\\
$P_3$ & $-11$ & $(-4w/11 : w/11 : 2w/11 : w/11 : w/11 : w/11 : 1 : 1)$ & $-32768$ &  $-11$\\
$P_4$ & $-1$ & $(w/2 : w/4 : -w/2 : 0 : w/4 : -w/4 : -1 : 1)$ & $1728$ & $-4$\\
$P_5$ & $-19$ & $(0 : -w/19 : 0 : 0 : -w/19 : 0 : 1 : 0)$ & $-884736$ & $-19$
\end{tabular}
\caption{\label{tab:quad_125}The finitely many quadratic points on $X_0(125)$. See the Introduction for the expression for $j_{509}$.}
\end{center}
\end{table}}
}
\medskip

\noindent Primes used in sieve: $3,7$.

\subsection{$X_0(169)$}
Model for $X_0(169)$: 
{\scriptsize{
\noindent\begin{align*} 
&x_1^2 - 12x_3^2 - 4x_3x_4 + 12x_3x_5 - 8x_4^2 - 12x_4x_5 -
    4x_5^2 - x_6^2 + 4x_6x_8 - 4x_7^2 - 4x_8^2=0,\\
&x_1x_2 - 7x_3^2 - 2x_3x_4 + 8x_3x_5 - 3x_4^2 - 8x_4x_5 +
    3x_5^2 - x_6x_7 + 2x_6x_8 - 2x_7^2 - x_8^2=0,\\
&x_1x_3 - 21/8x_3^2 - 9/4x_3x_4 + 13/4x_3x_5 - 25/8x_4^2 -
    11/4x_4x_5 - 45/8x_5^2 + 1/2x_6x_8 - 3/2x_7^2 + 1/2x_7x_8 -
    19/8x_8^2=0,\\
&x_1x_4 - 15/8x_3^2 + 1/4x_3x_4 + 15/4x_3x_5 - 3/8x_4^2 -
    5/4x_4x_5 + 9/8x_5^2 + 1/2x_6x_8 - 1/2x_7^2 - 1/2x_7x_8 -
    1/8x_8^2=0,\\
&x_1x_5 - x_3x_4 - x_4^2 + x_4x_5 - 2x_5^2 - x_8^2=0,\\
&x_1x_7 - x_2x_6 + 2x_4x_6 - 4x_4x_7 + x_4x_8 + 2x_5x_6 +
    3x_5x_7 - 2x_5x_8=0,\\
&x_1x_8 - x_3x_6 + x_4x_6 - 2x_4x_7 + 3x_5x_6 + 2x_5x_7 -
    2x_5x_8=0,\\
&x_2^2 - 3x_3^2 - 2x_3x_4 + 2x_3x_5 - 3x_4^2 - 2x_4x_5 -
    3x_5^2 - x_7^2 - x_8^2=0,\\
&x_2x_3 - 19/8x_3^2 + 1/4x_3x_4 + 11/4x_3x_5 + 1/8x_4^2 -
    13/4x_4x_5 + 21/8x_5^2 + 1/2x_6x_8 - 1/2x_7^2 - 1/2x_7x_8 +
    3/8x_8^2=0,\\
&x_2x_4 - 3/8x_3^2 - 3/4x_3x_4 + 3/4x_3x_5 - 7/8x_4^2 -
    1/4x_4x_5 - 27/8x_5^2 + 1/2x_6x_8 - 1/2x_7^2 + 1/2x_7x_8 -
    13/8x_8^2=0,\\
&x_2x_5 - 1/2x_3^2 + x_3x_5 + 1/2x_4^2 + 3/2x_5^2 + 1/2x_8^2=0,\\
&x_2x_7 - x_3x_6 - 1/2x_4x_8 + 2x_5x_6 + 1/2x_5x_7 -
    2x_5x_8=0,\\
&x_2x_8 - x_4x_6 + x_4x_7 + x_4x_8 - x_5x_7=0,\\
&x_3x_7 - x_4x_6 + 2x_4x_7 - 2x_5x_6 - x_5x_7 + 2x_5x_8=0,\\
&x_3x_8 - x_5x_6 + x_5x_8=0.
\end{align*}}}\noindent Genus of $X_0(169)$: $8$.

\noindent Genus of $X_0^+(169)$: $3$.

\noindent Rational cusps: $P_1:=(1:0 : 0 : 0 : 0 : 1 : 0 : 0), P_2:=(-1:0 : 0 : 0 : 0 : 1 : 0 : 0)$.

\noindent Torsion group of $J_0(169)(\Q):\ \Z/7\Z \cdot [P_1-P_2]$.

\noindent Quadratic points (up to conjugation): See \Cref{tab:quad_169}.
{\scriptsize{
\begin{table}
\begin{center}
\begin{tabular}{ccccc}
\label{tab:quad_169}
Name & $d$ & Coordinates & $j$-invariant & CM \\
\hline
$P_1$ & $-1$ & $(w : 0 : 0 : -w/2 : -w/2 : 1 : 1 : 1)$ & $287496$ & $-16$\\
$P_2$ & $-1$ & $(0 : -w/2 : 0 : w/4 : 3w/4 : 1 : -1 : 1)$ & $1728$ & $-4$\\
$P_3$ & $-3$ & $(-w/3 : w/3 : w/3 : -w/3 : 0 : 1 : 1 : 0)$ & $54000$ & $-12 $\\
$P_4$ & $-3$ & $(-5w/18 : w/6 : w/6 : w/2 : w/9 : 5/2 : 3/2 : 1)$ & $0$ & $-3$ \\
$P_5$ & $-3$ & $(w/3 : 0 : 0 : 0 : -w/3 : 1 : 0 : 1)$ & $-12288000$ & $-27$\\
$P_6$ & $-43$ & $(2w/43 : -w/43 : -2w/43 : 2w/43 : 2w/43 : 0 : 1 : 0)$ & $-884736000$ & $-43$
\end{tabular}
\caption{\label{tab:quad_169}The finitely many quadratic points on $X_0(169)$.}
\end{center}
\end{table}}}
\medskip

\noindent Primes used in sieve: $3,5$.

\section{Computing isogeny graphs}\label{sec:isogeny_graphs}

For a general number field $K$, Step (b) of Mazur's approach is to construct, for the $j$-invariants of each of the noncuspidal $K$-rational points on $X_0(N)$ (for $N \in \MF^f(K)$) found in Step (a), the $K$-rational isogeny graph, and extract any ``unrecorded'' isogenies; that is, isogenies whose degree is a multiple of $N$. (We need only consider $j$-invariants, since whether or not an elliptic curve admits a $K$-rational $N$-isogeny depends only on its $j$-invariant.) As Mazur observed in \cite{mazur1978rational}, this is an easy matter to determine (by ``pure thought''), and is made even easier thanks to the Sage implementation of isogeny graphs due to David Roe (for elliptic curves over $\Q$) and John Cremona (for elliptic curves over number fields). The code that carries this out may be found in \path{sage_code/isogeny_graphs.py}. The results have also been verified in the PARI/GP program, via the optimised \path{ellisomat} implementation based on Billerey's algorithm in \cite{billerey2011criteres}.

\begin{example}
We continue with the example of \Cref{sec:example_213}. To summarise the results of that section, we have that, for the levels $N$ in the following list:
\[ \mathcal{S}_1 = \left\{ N \leq 10\right\} \cup \left\{12, 13, 15, 16, 18, 20, 24, 25, 27, 32, 36\right\}  \]
there are infinitely many elliptic curves over $K = \Q(\sqrt{213})$ supporting a $K$-rational $N$-isogeny, and for $N$ in the following list:
\[ \mathcal{S}_2 = \left\{ 11, 14, 17, 19, 21, 37, 43, 49, 67, 163 \right\} \]
there are only finitely many such elliptic curves; moreover, the $j$-invariants of these curves are given in \Cref{prop:elliptic} and Propositions \ref{prop:hyperelliptic} and \ref{prop:non-hyperelliptic}.  For all other integers $N$, there are no elliptic curves supporting $K$-rational cylcic $N$-isogenies. Running \path{unrecorded_isogenies} in \path{sage_code/isogeny_graphs.py}, we find, just as for Mazur, no ``unrecorded'' isogenies, and may conclude that \Cref{tab:qsqrt213} is complete. We note that, by definition of $N$ being minimally finite for $K$, any proper divisor $d$ of $N$ is such that $X_0(d)$ admits infinitely many $K$-rational points; this explains the values of $N$ in \Cref{tab:qsqrt213} for which $\nu = \infty$. 
\end{example}

Carrying out this computation for the other 18 values listed in \Cref{thm:main}, we obtain tables similar to \Cref{tab:qsqrt213}. In the interest of concision we have chosen to present the tables of isogenies in a more condensed format, which may be found in \Cref{tab:all_isogs}. We only show isogenies (degrees as well as how many up to twist) which are not observed for elliptic curves over $\Q$. For these $19$ values, the additional degrees all correspond to values $N$ for which $X_0(N)$ is an elliptic curve of positive rank, and hence admit finitely many isogenies (up to twist). The computation of all necessary isogeny graphs, as well as the entries of \Cref{tab:all_isogs}, have been automated by the function \path{quadratic_kenku_solver} in \path{sage_code/quadratic_kenku_solver.py}, and the $19$ values may be verified by the function \path{very_convenient_vals} in \path{sage_code/utils.py}.

\begin{table}[htp]
\begin{center}
\begin{tabular}{|c|c|}
\hline
$d$ & $N : \nu_N = \infty$\\
\hline
$-6846$ & $15$, $27$, $32$, $36$\\
$-2289$ & $15$, $21$, $24$, $36$\\
$213$ & $15$, $20$, $24$, $27$, $32$, $36$\\
$834$ & $20$, $21$, $24$, $27$\\
$1545$ & $14$, $15$, $21$, $24$, $27$, $36$\\
$1885$ & $14$, $15$, $20$, $21$, $24$, $32$, $36$\\
$1923$ & $14$, $15$, $20$, $27$\\
$2517$ & $14$, $15$, $20$, $21$, $24$, $27$, $32$, $36$\\
$2847$ & $14$, $15$, $21$, $32$\\
$4569$ & $14$, $27$, $36$\\
$6537$ & $14$, $15$, $20$, $36$\\
$7131$ & $14$, $20$\\
$7302$ & $14$, $15$, $21$, $24$, $32$\\
$7319$ & $15$, $20$, $21$, $24$, $27$, $32$, $36$\\
$7635$ & $14$, $15$, $20$\\
$7890$ & $15$, $20$, $21$, $24$, $27$, $36$\\
$8383$ & $15$, $32$, $36$\\
$9563$ & $21$, $24$, $27$, $36$\\
$9903$ & $14$, $15$, $20$, $32$\\
\hline
\end{tabular}
\vspace{0.3cm}
\caption{\label{tab:all_isogs}Cyclic isogeny degrees for elliptic curves over $\Q(\sqrt{d})$. Here, for concision, we have only shown the isogenies which are not observed for elliptic curves over $\Q$. All of these values arise from genus $1$ modular curves which attain positive rank over $\Q(\sqrt{d})$.}
\end{center}
\end{table}

\section{Further work}\label{sec:further}

We remark in closing that, out of the $133$ values that were identified to be convenient, we were only able to successfully execute Mazur's strategy on $19$ of them. This was due to values of $N$ less than $100$ whose $K = \Q(\sqrt{d})$-rational points we were unable to determine. In particular, the value $N = 43$ posed an obstacle for many values of $d$, particularly when the twisted modular curve $X^d(43)$ was everywhere locally soluble. It is possible that these (and other) values may be treated with an application of the Mordell-Weil sieve, as is done (under additional assumptions which we do not have in our setup) in Section 4.4 of \cite{michaud-jacobs2022fermat}.

Finally, in the spirit of \Cref{q:get_all_fin_quads}, obtaining all finitely many quadratic points on $X_0(163)$ would be of great benefit to obtaining similar determinations over other quadratic fields.






\bibliographystyle{alpha}
\bibliography{references.bib}{}

\end{document}